\documentclass[review,hidelinks,onefignum,onetabnum]{siamart220329}

\DeclareMathOperator{\fl}{fl}
\DeclareMathOperator{\op}{op}
\floatplacement{figure}{H} 

\newcommand{\abs}[1]{\left\vert#1\right\vert}

\newcommand{\norm}[1]{\left\Vert#1\right\Vert}

\nolinenumbers


\usepackage{lipsum}
\usepackage{amsfonts}
\usepackage{graphicx}
\usepackage{epstopdf}
\usepackage{xcolor}
\usepackage{tikz}
\usepackage{listings}
\usepackage{caption}
\usepackage{float}
\usepackage{xspace}
\usepackage{amsmath, amssymb, amsfonts}
\usepackage{booktabs}
\usepackage{array,multirow,makecell}
\usepackage{stmaryrd}
\usepackage{scalerel}
\usepackage{csquotes}
\usepackage{subfig}
\usepackage[symbol]{footmisc}

\usepackage{algorithmic}
\ifpdf
  \DeclareGraphicsExtensions{.eps,.pdf,.png,.jpg}
\else
  \DeclareGraphicsExtensions{.eps}
\fi

\usepackage{diagbox}

\usetikzlibrary{trees,arrows}
\tikzstyle{level 1}=[level distance=30mm, sibling distance=30mm]
\tikzstyle{level 2}=[level distance=30mm, sibling distance=15mm]
\tikzstyle{level 3}=[level distance=20mm]

\newcolumntype{R}[1]{>{\raggedleft\arraybackslash }b{#1}}
\newcolumntype{L}[1]{>{\raggedright\arraybackslash }b{#1}}
\newcolumntype{C}[1]{>{\centering\arraybackslash }b{#1}}


\newsiamremark{remark}{Remark}
\newsiamremark{hypothesis}{Hypothesis}
\crefname{hypothesis}{Hypothesis}{Hypotheses}
\newsiamthm{claim}{Claim}
\newsiamremark{pro}{Proposition}
\newsiamremark{mode}{Model}
\newsiamremark{defn}{Definition}

\headers{Stochastic Rounding variance}{E-M. EL ARAR, D. SOHIER, P. de Oliveira Castro and E. PETIT}
\title{Bounds on Non-linear Errors for Variance Computation with Stochastic Rounding}
 

\author{E-M. EL ARAR,\thanks{Université Paris-Saclay, UVSQ, Li-PaRAD, Saint-Quentin en Yvelines, France
  (\email{el-mehdi.el-arar@uvsq.fr},\email{ devan.sohier@uvsq.fr}, \email{pablo.oliveira@uvsq.fr}).}
  \and D. SOHIER\footnotemark[2]  \and P. de Oliveira Castro\footnotemark[2]
\and E. PETIT\thanks{Intel Corp
  (\email{eric.petit@intel.com}).}}

\usepackage{amsopn}



\ifpdf
\hypersetup{
  pdftitle=Bounds on Non-linear Errors for Variance Computation with Stochastic Rounding
  pdfauthor={E-M. EL ARAR, D. SOHIER, P. de Oliveira Castro and E. PETIT}
}
\fi

\begin{document}

\maketitle

\begin{abstract}
    The main objective of this work is to investigate non-linear errors and pairwise summation using stochastic rounding (SR) in variance computation algorithms. We estimate the forward error of computations under SR through two methods: the first is based on a bound of the variance and the Bienaymé–Chebyshev inequality, while the second is based on martingales and the Azuma-Hoeffding inequality. The study shows that for pairwise summation, using SR results in a probabilistic bound of the forward error proportional to $\sqrt{\log(n)}u$ rather than the deterministic bound in $O(\log(n)u)$ when using the default rounding mode.
    We examine two algorithms that compute the variance, called ``textbook” and ``two-pass”, which both exhibit non-linear errors.
    Using the two methods mentioned above, we show that these algorithms' forward errors have probabilistic bounds under SR in $O(\sqrt{n}u)$ instead of $nu$ for the deterministic bounds. We show that this advantage holds using pairwise summation for both textbook and two-pass, with probabilistic bounds of the forward error proportional to $\sqrt{\log(n)}u$. 
\end{abstract}

\begin{keywords}
    Stochastic rounding, Floating-point arithmetic, Variance computation, Non-linear error, Doob–Meyer decomposition, Pairwise summation.
\end{keywords}

\begin{MSCcodes}
    65G50, 65C99, 65Y04, 62-08
\end{MSCcodes}

\section{Introduction}

Stochastic Rounding (SR) mode~\cite{survey} is a probabilistic rounding mode: an inexact computation is rounded to the next smaller or larger floating-point number with probability depending on the distances to those numbers. For several algorithms, such as the inner product~\cite{theo21stocha, arar2022stochastic, ilse} and Horner's rule~\cite{ arar2022stochastic, el2022positive}, SR is unbiased and provides tighter probabilistic bounds of the forward error compared to the deterministic bounds obtained with round-to-nearest (RN)~\cite{norm}. In practice, SR shows higher accuracy than RN for some applications and datasets~\cite{arar2022stochastic}, particularly in low-precision formats such as bfloat-16. Additionally, SR avoids numerical stagnation~\cite{theo21stocha} in different applications such as neural networks~\cite{gupta}, ODEs, and PDEs~\cite{climate}.

Previous theoretical studies of SR error bounds have only considered algorithms in which the numerical error is a linear function of each operation rounding error. Two main methods have been proposed to bound the forward error of linear error algorithms such as summation or inner product computation.  The first, referred to as the BC method in the following, computes the variance of the SR computation and applies Bienaymé–Chebyshev inequality to establish a probabilistic error bound~\cite{arar2022stochastic}. The second, called AH method in the following, is based on martingales and Azuma-Hoeffding inequality~\cite{theo21stocha}.  The two methods are complementary, and each has advantages depending on the size of the problem and the target probabilistic analysis.

Hallman and Ipsen~\cite{hallman2022precision} have studied pairwise summation in the context of SR, showing that the forward error for a sum of $n$ values has a probabilistic bound in $O(\sqrt{\log(n)}u)$ instead $O(\log(n)u)$ for RN. In this paper, we propose a more straightforward method that improves Hallman and Ipsen pairwise summation error bound~\cite{hallman2022precision}.

In $1983$, Chan, Golub, and LeVeque proved deterministic error bounds~\cite{chan1983algorithms} for different algorithms computing the variance of a sample of $n$ data points. These algorithms have non-linear errors due to the presence of squaring in the computation.  In this paper, we prove SR forward error bounds for the ``textbook" and ``two-pass" algorithms with recursive and pairwise summation studied by Chan, Golub, and LeVeque. To the best of our knowledge, this is the first paper theoretically studying non-linear problems with SR. 
We extend previous BC and AH methods to the non-linear variance computation by carefully separating the error terms. 

We first introduce some floating point background and the stochastic rounding mode SR-nearness in Section 2, and recall its main properties that we will use throughout the rest of the paper. 

We analyze the error of pairwise summation under SR-nearness in Section 3, using two methods, AH, and BC. We provide probabilistic bounds for the pairwise summation forward error under SR using two methods, the BC and AH methods. Our AH pairwise bound is simpler and at least as tight as the probabilistic bound proposed in~\cite{hallman2022precision}.

We then move to the analysis of variance computations, which, unlike summations, present non-linear errors. This, in particular, materializes in the existence of a bias, which we study in Section 4. We prove that both textbook and two-pass algorithms are biased and that their biases are equal at order 1 but of opposite signs. 


In Section 5, we show that the deterministic bounds of Chan, Golub and LeVeque [2] extend to SR computations by replacing the $n$ in the bounds by $\sqrt{n}$, and introducing a parameter $\lambda$ representing the probability that the bound does not hold. We do it with both BC and AH methods, leading to bounds behaving better when $n\rightarrow \infty$ or $\lambda\rightarrow0$ respectively, and propose an extension DM of the AH method based on a Doob-Meyer decomposition, which allows to better account for the bias and provides a new tool for SR analysis of non-linear errors. 

We then prove that using pairwise summation in variance computations gives bounds in $\sqrt{\log(n)}$ in Section 6. 
We finally compare the obtained bounds by algorithm (textbook or two-pass) and method (deterministic, BC, AH, DM), and discuss the advantages of each in different situations in Section 7.

\section{Notations and definitions}\label{sec:back}
\subsection{Notations}
In this paper, for an integer $n$ and a vector $x\in \mathbb{R}^n$,  we denote by
\begin{itemize}
    \item $\norm{x}_1 = \sum_{i=1}^{n} \abs{x_i}$ and $\norm{x}_2 = \left(\sum_{i=1}^{n} \abs{x_i}^2\right)^\frac12$.
    \item $s= \sum_{i=1}^{n} x_i$ and $m =\frac{1}{n} \sum_{i=1}^{n} x_i =\frac{1}{n} s$.
    \item $\gamma_{n}(u)=(1+u)^{n} -1$.
    \item $\log(n)$ the smallest integer greater than $\log_2(n)$.
\end{itemize}

We adopt the same notations as used in~\cite{chan1983algorithms}. In the following,
the textbook algorithm computes the variance using the formula $y = \sum_{i=1}^{n} x_i^2 - \frac{1}{n}s^2$, while the two-pass algorithm computes the variance using the formula $z = \sum_{i=1}^{n} \left(x_i -m\right)^2$. We do not study the situation with $y=z=0$, in which the relative error is undefined. The statistical variance can be obtained by multiplying $y$ and $z$ by $\frac{1}{n-1}$. Computing $y$ and $z$ exactly results in $y=z$. However, rounding errors disturb the numerical computations and the obtained results $\widehat{y}$ and $\widehat{z}$ are not equal.

The condition number using the 2-norm for the variance computation is defined in~\cite{chan1983algorithms} as $\mathcal{K}_2 = \frac{\norm{x}_2}{\sqrt{y}}$.
We define the condition number using the 1-norm by $\mathcal{K}_1 = \frac{\norm{x}_1}{\sqrt{ny}}$.
Using the Cauchy-Schwarz inequality, $\mathcal{K}_1 \leq \mathcal{K}_2$; $\mathcal{K}_1$ can be lower than $1$ (for instance, consider $n=4$ and $x_1= 1/2$, $x_2=1/4$, $x_3=-x_1$ and $x_4=-x_2$).

Throughout this paper, for a random variable $X$, $E(X)$ denotes its expected value, $V(X)$ denotes its variance and $\sigma(X)$ denotes its standard deviation. The conditional expectation of $X$ given $Y$ is $\mathbb{E}[X / Y]$.

\begin{lemma}\label{lem:proba}
    Let $X$ and $Y$ two random variables, $a,b\in \mathbb{R}_+^*$,  and $ \lambda, \mu \in ]0; 1[$ such that: $\mathbb{P}(\abs{ X }\leq a) \geq 1- \lambda$ and $\mathbb{P}(\abs{ Y }\leq b) \geq 1- \mu$. Then
    \begin{itemize}
        \item $\mathbb{P}(\abs{ X Y }\leq ab) \geq 1- (\lambda +\mu)$,
        \item $\mathbb{P}(\abs{ X }+ \abs{ Y }\leq a + b) \geq 1- (\lambda+\mu)$.
    \end{itemize}
\end{lemma}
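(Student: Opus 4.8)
The plan is to prove each of the two claims by a straightforward union-bound argument on the complementary "bad" events. Define the events $A = \{\abs{X} \le a\}$ and $B = \{\abs{Y} \le b\}$, so that $\mathbb{P}(A^c) \le \lambda$ and $\mathbb{P}(B^c) \le \mu$ by hypothesis. The key observation is that the intersection $A \cap B$ is contained in each of the two target events. Indeed, on $A \cap B$ we have $\abs{XY} = \abs{X}\,\abs{Y} \le ab$ (here we use $a,b > 0$, or really just that $\abs{X},\abs{Y}\ge 0$, so the product inequality is monotone), and likewise $\abs{X} + \abs{Y} \le a + b$. Hence both target events contain $A\cap B$, and it suffices to lower bound $\mathbb{P}(A\cap B)$.

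For that final step I would write $\mathbb{P}(A \cap B) = 1 - \mathbb{P}(A^c \cup B^c) \ge 1 - \mathbb{P}(A^c) - \mathbb{P}(B^c) \ge 1 - (\lambda + \mu)$, using subadditivity of the probability measure (the union bound). Combining with the inclusions above gives $\mathbb{P}(\abs{XY} \le ab) \ge \mathbb{P}(A\cap B) \ge 1 - (\lambda+\mu)$ and similarly for $\abs{X} + \abs{Y}$, which is exactly the two assertions. Note that no independence between $X$ and $Y$ is needed, which is important since in the applications later in the paper $X$ and $Y$ will typically be correlated error terms.

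There is essentially no obstacle here; the only thing to be slightly careful about is the monotonicity of multiplication used to pass from $\abs{X}\le a$ and $\abs{Y}\le b$ to $\abs{XY}\le ab$, which relies on all four quantities being nonnegative — guaranteed since $a,b\in\mathbb{R}_+^*$ and absolute values are nonnegative. One might also remark that the hypothesis $\lambda,\mu\in\,]0;1[$ is not actually used in the proof beyond making the statement meaningful (if $\lambda+\mu\ge 1$ the conclusion is vacuous but still true). I would keep the proof to three or four lines and not belabor these points.
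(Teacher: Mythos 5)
Your proof is correct and follows essentially the same route as the paper: both arguments reduce each claim to lower-bounding $\mathbb{P}(\{\abs{X}\le a\}\cap\{\abs{Y}\le b\})$ by $1-(\lambda+\mu)$, you via the union bound on the complements and the paper via inclusion--exclusion, which are the same computation. The only cosmetic difference is that the paper derives the second item from the first through an exponential/logarithm correspondence, whereas you obtain both items directly from the containment of the intersection event, which is arguably cleaner.
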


\begin{proof}
    
    \begin{align*}
        \mathbb{P}(\abs{X} \abs{Y} \leq ab) & \geq \mathbb{P}( \{\abs{X} \leq a\} \cap  \{\abs{Y} \leq b\})                                                            \\
                                            & =  \mathbb{P}(\abs{X} \leq a) +  \mathbb{P}(\abs{Y} \leq b) -  \mathbb{P}(  \{\abs{X} \leq a\} \cup  \{\abs{Y} \leq b\}) \\
                                            & \geq 1- \lambda + 1- \mu -1 = 1- (\lambda +\mu).
    \end{align*}
    The proof of the second item uses the first point and the following property $\log(ab) = \log(a) + \log(b)$.
\end{proof}

\subsection{Floating-point background}\label{sec:FP_def}

For a given basis $\beta$ and a working precision $p$, a floating-point number is a real $x$ such that $x=  m \times \beta^{e-p}$, where $e$ is the exponent and $m$ is an integer (the significand) such that $\beta^{p-1} \leq \abs{m} < \beta^p$.
In this paper, we don't take into account special floating-point values such as underflow, overflow, denormals, and NaNs. Detailed information on the floating-point format most generally in use in current computer systems is defined in the IEEE-754 standard~\cite{norm}.

Let us denote $\mathcal{F}\subset \mathbb{R}$, the set of floating-point numbers, and $x\in \mathbb{R}$. Upward rounding $\lceil x \rceil $ and downward rounding $\lfloor x \rfloor$ are defined by:
$$ \lceil x \rceil=\min\{y\in \mathcal{F} : y \geq x\},  \quad \lfloor x \rfloor=\max\{y\in \mathcal{F} : y \leq x\},
$$
by definition, $\lfloor x \rfloor  \leq x \leq \lceil x \rceil$, with equality if and only if $x \in\mathcal{F}$.
The floating-point approximation of a real number $x\ne 0$ is one of $\lfloor x\rfloor$ or $\lceil x\rceil$:
\begin{equation}
    \fl(x) =x(1+\delta), \label{fl(x)}
\end{equation}
where $\delta = \frac{ \fl(x) - x}{x}$ is the relative error: $\abs{ \delta }\leq \beta^{1-p}$.
In the following, we use the same notation as~\cite{theo21stocha, ilse} $u=\beta^{1-p}$. IEEE-754 mode RN (round to nearest, ties to even) has the stronger property that $\abs{ \delta }\leq\frac12\beta^{1-p}=\frac12u$. In many works focusing on IEEE-754 RN, $u$ is chosen instead to be $\frac12\beta^{1-p}$.

For $x, y\in\mathcal F$, the considered rounding modes verify  $\fl(x\op y)\in\{\lfloor x\op y\rfloor, \lceil x\op y\rceil\}$ for $\op\in\{+, -, *, /\}$. Moreover, for IEEE-754 rounding modes~\cite{norm} and stochastic rounding~\cite{theo21stocha} the error in one operation is bounded:
\begin{equation}
    \fl(x \op y) = (x \op y)(1+\delta), \; \abs{ \delta }\leq u; \label{fl(xopy)}
\end{equation}
specifically for RN we have $\abs{ \delta }\leq \frac12 u$.

In this paper, we investigate asymptotic results for a problem of size $n$ and precision $u$; $nu \ll 1$ means $ n \rightarrow \infty$, $ u\rightarrow 0$ and $nu \rightarrow 0$.

\subsection{Stochastic rounding}
\label{sec:SR}

Throughout this paper, $\widehat x = \fl(x)$ is the approximation of the real number $x$ under stochastic rounding.
For $x\in \mathbb{R}\setminus \mathcal{F}$, we consider the following stochastic rounding mode, called SR-nearness:

\noindent\begin{minipage}{.52\linewidth}

\begin{align*}
    \fl(x) & = \left\{
    \begin{array}{cl}
        \lceil x \rceil   & \text{with probability $p(x)$,} \\
        \lfloor x \rfloor & \text{with probability $1-p(x)$.}
    \end{array}
    \right.
\end{align*}
\end{minipage}%
\begin{minipage}{.475\linewidth}
\begin{figure}
    \centering
    \begin{tikzpicture}[xscale=5]
        \draw (0,0) -- (1,0);
        \draw[shift={(0,0)},color=black] (0pt,0pt) -- (0pt, 2pt) node[below] {$\lfloor x \rfloor$};
        \draw[shift={(1,0)},color=black] (0pt,0pt) -- (0pt, 2pt) node[below] {$\lceil x \rceil$};
        \draw[shift={(.3,0)},color=black] (0pt,0pt) -- (0pt, 2pt) node[below] {$x$};
        \draw (0,0) .. controls (.15,.4) .. (.3,0)  (0.15,0.5) node {$1-p(x)$};
        \draw (.3,0) .. controls (.65,.7) .. (1,0) (0.65,0.8) node {$p(x)$} ;
    \end{tikzpicture}
    \caption{\textbf{SR-nearness}.}
\end{figure}
\end{minipage}
\\
where  $p(x)= \frac{x - \lfloor x \rfloor}{\lceil x \rceil - \lfloor x \rfloor }$. The rounding SR-nearness  mode is unbiased
\begin{align*}
E(\widehat x) & = p(x)\lceil x \rceil +(1-p(x))\lfloor x \rfloor \\
& = p(x)(\lceil x \rceil - \lfloor x \rfloor) + \lfloor x \rfloor =x.
\end{align*}

In general, under SR-nearness, the error terms in algorithms appear as a sequence of random variables. The following lemma has been proven in~\cite[lem 5.2]{theo21stocha} and shows that this sequence is mean independent when considering operations satisfying the standard model (ie. the result is calculated exactly on the basis of its input, and then rounded according to SR).

\begin{lemma}\label{meanindp}
   Consider a sequence of elementary operations $c_k \leftarrow a_k \op_k b_k$ for $k \geq 1$, with $\op_k$ satisfying the standard model and $\delta_k$ the error of the $k^{\text{th}}$ operation, that is to say, $\widehat c_k = (\widehat{a}_k \op_k \widehat b_k) (1+\delta_k)$.
    The $\delta_k$ are random variables with mean zero and $(\delta_1, \delta_2, \ldots)$ is mean independent, i.e., $ \forall k \geq 2, \mathbb{E}[\delta_k \mid \delta_1,\ldots , \delta_{k-1}] = \mathbb{E}(\delta_k)$.
\end{lemma}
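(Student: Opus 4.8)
The plan is to exploit the unbiasedness identity $E(\widehat x)=x$ for SR-nearness established just above, applied \emph{conditionally} on everything that has happened before the $k$-th operation. First I would fix $k\geq 1$ and write $t_k = \widehat a_k \op_k \widehat b_k$ for the exact value produced by the $k$-th operation applied to the already-rounded operands, so that $\widehat c_k = t_k(1+\delta_k)$ and hence $\delta_k = (\widehat c_k - t_k)/t_k$ whenever $t_k\neq 0$ (with the convention $\delta_k=0$, $\widehat c_k=0$ otherwise). The one modeling ingredient I would make explicit is that SR-nearness performs each rounding with a \emph{fresh} source of randomness, independent of the draws used in operations $1,\dots,k-1$; this is part of the definition of the rounding mode and is exactly what turns the $\delta_k$ into a martingale-difference-type sequence.

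Next I would introduce the filtration $\mathcal F_{k-1} = \sigma(\delta_1,\dots,\delta_{k-1})$ and check, by induction on the operation index, that the operands of the $k$-th operation — and therefore $t_k$ — are $\mathcal F_{k-1}$-measurable. The base case is trivial since the operands of the first operation are input data, hence deterministic. For the inductive step, each earlier output satisfies $\widehat c_j = t_j(1+\delta_j)$, where $t_j$ is a fixed arithmetic combination (via $\op_j$) of input data and of outputs $\widehat c_i$ with $i<j$; so if the $\widehat c_i$ for $i<j$ are $\mathcal F_{j-1}$-measurable, then $t_j$ is $\mathcal F_{j-1}$-measurable and $\widehat c_j$ is $\mathcal F_j$-measurable, closing the induction. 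In particular $\widehat a_k,\widehat b_k,t_k$, and $1/t_k$ on $\{t_k\neq 0\}$, are all $\mathcal F_{k-1}$-measurable.

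Finally I would condition on $\mathcal F_{k-1}$: given the past, $t_k$ is a (known) constant, and $\widehat c_k$ is the SR-nearness image of this now-constant real obtained with randomness independent of $\mathcal F_{k-1}$, so the unbiasedness identity yields $\mathbb E[\widehat c_k \mid \mathcal F_{k-1}] = t_k$. On $\{t_k\neq 0\}$, pulling out the $\mathcal F_{k-1}$-measurable factor $1/t_k$ gives $\mathbb E[\delta_k\mid \mathcal F_{k-1}] = (\mathbb E[\widehat c_k\mid \mathcal F_{k-1}] - t_k)/t_k = 0$, while on $\{t_k=0\}$ we have $\delta_k=0$; hence $\mathbb E[\delta_k\mid\mathcal F_{k-1}]=0$ almost surely. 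Taking expectations gives $\mathbb E(\delta_k)=0$, and since $\mathcal F_{k-1}=\sigma(\delta_1,\dots,\delta_{k-1})$ this is precisely $\mathbb E[\delta_k\mid \delta_1,\dots,\delta_{k-1}] = \mathbb E(\delta_k) = 0$, i.e. mean independence.

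The step I expect to be the real obstacle is not the conditional-expectation computation, which is short, but making the measurability bookkeeping of the second paragraph airtight: one must fix a precise computational and probabilistic model in which ``the operands of operation $k$ are functions of the data and of $\delta_1,\dots,\delta_{k-1}$'' and ``each rounding uses independent fresh randomness'' are genuine hypotheses rather than informal statements. Without the independence of the fresh randomness the conclusion can fail, so that assumption has to be carried explicitly through the argument.
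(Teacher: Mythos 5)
Your argument is correct, and it is essentially the argument behind this lemma: the paper does not prove it but cites \cite[Lem.\ 5.2]{theo21stocha}, whose proof is exactly your third paragraph — condition on the past, observe that the exact operands of the $k$-th operation are then determined, and invoke the conditional unbiasedness of SR-nearness under the (standing) assumption that each rounding uses fresh randomness independent of the earlier draws. Your measurability induction and the explicit handling of the case $t_k\in\mathcal F$ (where $\delta_k=0$) only make the cited proof's bookkeeping airtight; nothing is missing.
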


\section{Pairwise summation}
\label{sec:parwise}
It is known that the accumulator implementation of a sum of $n$ numbers $s=\sum_{i=1}^n x_i$ using a binary tree leads to a deterministic error bound in $O(\log(n)u)$.
In this section, we investigate the forward error made by the pairwise summation under SR-nearness.

For the AH method, we construct a martingale straight from the tree levels and then use Azuma-Hoeffding inequality. This technique has the advantage of building a martingale from the entire tree.
For the BC method, we use~\cite[lem 3.1]{arar2022stochastic} and Bienaymé–Chebyshev inequality. 
Both methods show $O(\sqrt{\log(n)}u)$ probabilistic bounds on the forward error. These bounds are simpler and more intuitive than the bounds in~\cite{hallman2022precision}.

Considering $h$ the height of the summation tree, if $2^{h-1} < n < 2^h$, we set the $2^h-n$ absent inputs to zero. Without loss of generality, let us then assume that $n=2^h$.  Denote $S_i^0 = x_i$ and $S_i^k = S_{2i-1}^{k-1} + S_{2i}^{k-1}$ for all $1 \leq i \leq 2^{h-k}$ and  $1\leq k \leq h$. We have
$$
    S_l^k = \sum_{i=(l-1)2^k +1}^{l2^k} x_i \quad \text{and} \quad S_1^h = \sum_{i=1}^{2^h} x_i = s.$$
Let $\widehat{S}_i^0 = S_i^0$ and $\widehat{S}_i^k = (\widehat{S}_{2i-1}^{k-1} + \widehat{S}_{2i}^{k-1})(1+\delta_i^k)$ for all $1\leq i \leq 2^{h-k}$ and $1\leq k \leq h$. We have
$
    \widehat{S}_l^k = \sum_{i=(l-1)2^k +1}^{l2^k} x_i\prod_{j=1}^k(1+\delta_{\lceil \frac{i}{2^{j}} \rceil}^j).
$

In particular
\begin{equation}
    \label{eq:pairwise}
    \widehat{S}_1^h = \sum_{i=1}^{2^h} x_i\prod_{j=1}^h(1+\delta_{\lceil \frac{i}{2^{j}} \rceil}^j) =  \sum_{i=1}^{2^h} x_i \psi_i \ \text{with} \ \psi_i = \prod_{j=1}^h(1+\delta_{\lceil \frac{i}{2^{j}} \rceil}^j).
\end{equation}

As mentioned in Section~\ref{sec:FP_def}, we compare the asymptotic behavior of the forward error bounds. El Arar et al~\cite{arar2022stochastic} have introduced a new approach based on a bound of the variance and Bienaymé–Chebyshev inequality to obtain probabilistic bounds of the forward error and applied it to Horner's rule. These bounds have the advantage of being closer to the forward error for a large $n$ and a fixed probability than the ones based on the Azuma-Hoeffding inequality. At the same time, Higham and Mary~\cite{theo21stocha} and Ipsen and Zhou~\cite{ilse} used martingales and the Azuma-Hoeffding inequality to obtain probabilistic bounds of the forward error. BC bounds prove better than AH asymptotically in $n$, while AH outperforms BC for $\lambda \rightarrow 0 $.
In the following, we present these two methods and show that SR benefits extend to pairwise summation. In particular, our probabilistic bounds are lower than any deterministic ones (at the expense of introducing a probability that they do not hold).

\subsection{BC method}
Let us recall the lemma that bounds the variance of an error product $\varphi = \prod_{k=1}^n (1+\delta_k)$ under SR-nearness. A general expression of this lemma can be found in~\cite[Lemma 3.1]{arar2022stochastic}.

\begin{lemma}(from~\cite[Lemma 3.1]{arar2022stochastic})\label{model}
Under SR-nearness $\varphi$ satisfies 
\begin{enumerate}
    \item $E(\varphi) = 1$.  
    \item $V(\varphi) \leq \gamma_n(u^2) $,
\end{enumerate}
where $\gamma_n(u^2)= (1+u^2)^{n}-1 \approx \exp{(nu^2)} -1 = nu^2 + O(n^2 u^4)$ for $nu^2 \ll 1$.
\end{lemma}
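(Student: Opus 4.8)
The plan is to establish both items by a short induction on $n$, combining the mean independence of the sequence $(\delta_1,\delta_2,\ldots)$ from Lemma~\ref{meanindp} with the deterministic bound $\abs{\delta_k}\le u$ of \eqref{fl(xopy)}. Write $\varphi_m = \prod_{k=1}^m (1+\delta_k)$, so that $\varphi = \varphi_n$ and each $\varphi_m$ is a deterministic function of $\delta_1,\ldots,\delta_m$ alone; in particular $\varphi_{n-1}$ and $\varphi_{n-1}^2$ are known once $\delta_1,\ldots,\delta_{n-1}$ are fixed, and may therefore be pulled out of an expectation conditioned on $\delta_1,\ldots,\delta_{n-1}$. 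For the expectation, applying the law of total expectation by conditioning on $\delta_1,\ldots,\delta_{n-1}$ and pulling out the factor $\varphi_{n-1}$ gives
\[
  E(\varphi_n) = E\!\left[\varphi_{n-1}\,\mathbb{E}\!\left[1+\delta_n \mid \delta_1,\ldots,\delta_{n-1}\right]\right] = E(\varphi_{n-1}),
\]
since $\mathbb{E}[\delta_n \mid \delta_1,\ldots,\delta_{n-1}] = E(\delta_n) = 0$ by Lemma~\ref{meanindp} and the unbiasedness of SR-nearness; with $E(\varphi_1) = 1 + E(\delta_1) = 1$ the induction closes and $E(\varphi) = 1$.

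For the variance, since $E(\varphi)=1$ we have $V(\varphi) = E(\varphi^2) - 1$, so it is enough to prove $E(\varphi_n^2)\le (1+u^2)^n$. Conditioning again on $\delta_1,\ldots,\delta_{n-1}$ and factoring out the \emph{nonnegative} quantity $\varphi_{n-1}^2$,
\[
  E(\varphi_n^2) = E\!\left[\varphi_{n-1}^2\,\mathbb{E}\!\left[(1+\delta_n)^2 \mid \delta_1,\ldots,\delta_{n-1}\right]\right],
\]
and the inner conditional expectation satisfies
\[
  \mathbb{E}\!\left[(1+\delta_n)^2 \mid \delta_1,\ldots,\delta_{n-1}\right] = 1 + 2\,\mathbb{E}[\delta_n \mid \delta_1,\ldots,\delta_{n-1}] + \mathbb{E}[\delta_n^2 \mid \delta_1,\ldots,\delta_{n-1}] \le 1 + u^2,
\]
using $\mathbb{E}[\delta_n \mid \delta_1,\ldots,\delta_{n-1}] = 0$ once more together with the pointwise bound $\delta_n^2 \le u^2$. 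Because $\varphi_{n-1}^2\ge 0$, this yields $E(\varphi_n^2)\le (1+u^2)\,E(\varphi_{n-1}^2)$, and with the base case $E(\varphi_1^2) = 1 + E(\delta_1^2)\le 1+u^2$ the induction gives $E(\varphi_n^2)\le (1+u^2)^n$, hence $V(\varphi)\le (1+u^2)^n-1 = \gamma_n(u^2)$. The displayed estimate $\gamma_n(u^2)\approx \exp(nu^2)-1 = nu^2 + O(u^4)$ under $nu^2\ll 1$ is then a routine Taylor expansion.

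No step presents a genuine obstacle; the two points that require care are that $\varphi_{n-1}$ (resp.\ $\varphi_{n-1}^2$) may be taken out of the conditional expectation precisely because it depends only on $\delta_1,\ldots,\delta_{n-1}$, and — in the variance step — that this factor is nonnegative, so that multiplying the bound $\mathbb{E}[(1+\delta_n)^2\mid\delta_1,\ldots,\delta_{n-1}]\le 1+u^2$ by it preserves the direction of the inequality. It is also worth stressing what mean independence is and is not used for: it serves only to eliminate the first-order term $\mathbb{E}[\delta_n\mid\delta_1,\ldots,\delta_{n-1}]$, whereas the second conditional moment is controlled purely by the deterministic bound $\abs{\delta_n}\le u$, so no full (joint) independence of the $\delta_k$ is needed.
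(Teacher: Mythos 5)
Your proof is correct. The paper does not reprove this lemma---it imports it directly from \cite[Lemma 3.1]{arar2022stochastic}---and your induction via the law of total expectation, pulling out the $\delta_1,\ldots,\delta_{n-1}$-measurable factor and using mean independence only to kill the first-order term while bounding the second moment by the deterministic $\abs{\delta_n}\le u$, is precisely the standard argument behind that cited result, so nothing further is needed.
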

This lemma has been used to study the inner product and Horner's algorithm in~\cite{arar2022stochastic}. For the pairwise summation (provided that $\sum x_i\neq 0$, because we are considering a relative error that cannot be defined if the result is 0), we have
\begin{theorem}
    \label{parwise:var-thm}
    For all $0 < \lambda <1$, the computed $\widehat{S}_1^h $ satisfies under SR-nearness
    \begin{equation}
        \label{pairwise:varince-bound}
        \frac{\abs{\widehat{S}_1^h - S_1^h}}{\abs{S_1^h}} \leq  \kappa \sqrt{\gamma_{\log(n)}(u^2)/\lambda},
    \end{equation}
    with probability at least $1-\lambda$, where $\kappa =\frac{\sum_{i=1}^{n} \abs{ x_i}}{\abs{ \sum_{i=1}^{n}  x_i}}$ is the condition number using the 1-norm of the sum of the $x_i$.
\end{theorem}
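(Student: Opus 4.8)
The plan is to write the forward error as a centered random variable, bound its variance through Lemma~\ref{model}, and then apply the Bienaymé–Chebyshev inequality. By~\eqref{eq:pairwise} the absolute error is
\[
  \widehat{S}_1^h - S_1^h \;=\; \sum_{i=1}^{n} x_i\,(\psi_i - 1),
  \qquad \psi_i = \prod_{j=1}^{h}\bigl(1+\delta_{\lceil i/2^{j}\rceil}^{j}\bigr),
\]
where each $\psi_i$ is a product of exactly $h=\log(n)$ stochastic-rounding error terms, namely those collected along the path from leaf $i$ to the root, taken in the order in which the corresponding additions are performed. First I would verify that Lemma~\ref{model} applies to each such product: by Lemma~\ref{meanindp} the errors along a root path form a mean-zero, mean-independent sequence, which is all the proof of Lemma~\ref{model} uses. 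This yields $E(\psi_i)=1$, hence $E\bigl(\widehat{S}_1^h - S_1^h\bigr)=0$, and $V(\psi_i)\le\gamma_{\log(n)}(u^2)$.

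The key step is the variance bound on the whole sum. Since two distinct leaves share all the rounding errors of their common ancestors, the $\psi_i$ are correlated, so I cannot simply add their variances. Instead I would use the triangle inequality in $L^2$, that is, the subadditivity of the standard deviation $\sigma(\cdot)$, to write $\sigma\bigl(\sum_i x_i\psi_i\bigr)\le\sum_i\abs{x_i}\,\sigma(\psi_i)$, and combine it with the per-factor bound above to obtain
\[
  V\bigl(\widehat{S}_1^h - S_1^h\bigr)
  \;=\; V\Bigl(\sum_{i=1}^{n} x_i\psi_i\Bigr)
  \;\le\; \Bigl(\sum_{i=1}^{n}\abs{x_i}\Bigr)^{\!2}\gamma_{\log(n)}(u^2)
  \;=\; \norm{x}_1^2\,\gamma_{\log(n)}(u^2).
\]

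To conclude, I would apply Bienaymé–Chebyshev to the centered variable $\widehat{S}_1^h - S_1^h$ with threshold $t=\norm{x}_1\sqrt{\gamma_{\log(n)}(u^2)/\lambda}$, giving $\mathbb{P}\bigl(\abs{\widehat{S}_1^h - S_1^h}\ge t\bigr)\le\lambda$, and then divide the event by $\abs{S_1^h}=\bigl\lvert\sum_{i=1}^n x_i\bigr\rvert$ to recover exactly~\eqref{pairwise:varince-bound} with $\kappa=\norm{x}_1/\bigl\lvert\sum_{i=1}^n x_i\bigr\rvert$. The one genuinely delicate point is the first: justifying that Lemma~\ref{model}, which is phrased for a contiguous block of operations, is legitimately applied to the tree-indexed, non-contiguous family of errors occurring inside a single $\psi_i$. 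Once that is granted, the $L^2$-triangle inequality and Bienaymé–Chebyshev are routine, and they also make transparent why the deterministic $\log(n)u$ bound degrades only to a probabilistic $\sqrt{\log(n)}u$ one, since the square root acts on the $u^2$ appearing in $\gamma_{\log(n)}(u^2)$.
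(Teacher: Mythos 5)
Your proposal is correct and follows essentially the same route as the paper's own proof: apply Lemma~\ref{model} to each path-product $\psi_i$ to get $E(\psi_i)=1$ and $V(\psi_i)\le\gamma_{\log(n)}(u^2)$, bound $V(\widehat{S}_1^h)$ by $\bigl(\sum_i\abs{x_i}\sqrt{V(\psi_i)}\bigr)^2$ (the same $L^2$ triangle inequality the paper uses implicitly), and finish with Bienaymé–Chebyshev. Your extra care in justifying that Lemma~\ref{model} applies to the tree-indexed subfamily of errors via mean independence is a welcome refinement of a step the paper takes for granted, but it does not change the argument.
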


\begin{proof}
    By expectation linearity, $E(\widehat{S}_1^h ) = \sum_{i=1}^{2^h}  x_i E(\psi_i)$. Lemma~\ref{model} shows that for all $1\leq i\leq 2^h$, $E(\psi_i) =1$ and $V(\psi_i) \leq \gamma_{h}(u^2)$. It follows that, $E(\widehat{S}_1^h ) = S_1^h$ and $V(\widehat{S}_1^h ) \leq \left(\sum_{i=1}^{2^h} \abs{x_i} \sqrt{V(\psi_i)}\right)^2 \leq  \norm{x}_1^2 \gamma_{h}(u^2)$. the Bienaymé–Chebyshev inequality implies
    $ \mathbb{P}\left(\abs{\widehat{S}_1^h - E(\widehat{S}_1^h) } \leq    \sqrt{V(\widehat{S}_1^h)/\lambda}  \right) \geq 1-\lambda$. Thus,  with probability at least $1-\lambda$,
    \begin{align*}
        \frac{\abs{\widehat{S}_1^h - S_1^h}}{\abs{S_1^h}} & \leq \frac{1}{\abs{S_1^h}} \sqrt{V(\widehat{S}_1^h)/\lambda}
        \leq  \frac{\norm{x}_1}{\abs{S_1^h}}\sqrt{\gamma_{h}(u^2)/\lambda} =\kappa \sqrt{\gamma_{h}(u^2)/\lambda}.
    \end{align*}
    Since $h=\log(n)$, we have with probability at least $1-\lambda$,
    \begin{align*}
        \frac{\abs{\widehat{S}_1^h - S_1^h}}{\abs{S_1^h}} \leq \kappa \sqrt{\gamma_{\log(n)}(u^2)/\lambda}.
    \end{align*}
    
\end{proof}

\subsection{AH method}
This method uses martingales and then applies the Azuma-Hoeffding inequality for a martingale~\cite{Azu67, hoef63}.

\begin{defn}
    \label{def:martingale}
    A sequence of random variables $(M_0,\ldots,M_n)$ is a martingale with respect to the sequence $X_0,\ldots,X_n$ if, for all $k,$
    \begin{itemize}
        \item $M_k$ is a function of $X_0, \ldots, X_k$,
        \item $E(\abs{ M_k }) < \infty,$ and
        \item $E[M_k / X_0,\ldots, X_{k-1}]=M_{k-1}$.
    \end{itemize}
    If $E[M_k /  X_0,\ldots,X_{k-1}] \geq M_{k-1}$, $(M_0,\ldots,M_n)$ is called submartingale.
\end{defn}

\begin{lemma}[Azuma-Hoeffding inequality]
    \label{hoeffding}
    Let $(M_0,\ldots,M_n)$ be a martingale with respect to a sequence $X_0,\ldots,X_n.$ We assume that there exist $a_k<b_k$ such that $a_k \leq M_k - M_{k-1} \leq  b_k$ for $k\in\{ 1,\ldots,n\}$. Then, for any $A  > 0$,
    $$ \mathbb{P}(\abs{ M_n - M_0 }\geq A) \leq 2 \exp \left(
        -\frac{2A^2}{\sum_{k=1}^n(b_k-a_k)^2}
        \right).
    $$
    In the particular case $a_k=-b_k$ and $\lambda = 2 \exp \left(
        -\frac{A^2}{2\sum_{k=1}^n b_k^2} \right) $ we have
    $$ \mathbb{P}\left( \abs{ M_n - M_0 }\leq \sqrt{\sum_{k=1}^n b_k^2} \sqrt{2 \ln (2 / \lambda)} \right) \geq 1- \lambda,
    $$
    where $0< \lambda <1$.
\end{lemma}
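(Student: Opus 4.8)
The plan is to reduce the statement to the classical exponential-moment (Chernoff) argument for martingales, combining Hoeffding's lemma with iterated conditional expectations; since this is a well-known result (see~\cite{azum}), one legitimate option is simply to cite it, but a self-contained sketch is short. First I would introduce the martingale differences $D_k = M_k - M_{k-1}$ for $1 \le k \le n$, so that $M_n - M_0 = \sum_{k=1}^n D_k$. The martingale property gives $\mathbb{E}[D_k \mid X_1,\ldots,X_{k-1}] = 0$ (for $k=1$ this is the unconditional statement $\mathbb{E}[D_1] = 0$), while by hypothesis $a_k \le D_k \le b_k$ almost surely.

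The one genuinely non-routine ingredient is Hoeffding's lemma: for $t>0$ and any random variable $D$ with $\mathbb{E}[D \mid \mathcal{G}] = 0$ and $a \le D \le b$, one has $\mathbb{E}[e^{tD} \mid \mathcal{G}] \le \exp(t^2(b-a)^2/8)$. I would prove it by convexity: on $[a,b]$ the function $x \mapsto e^{tx}$ lies below the chord joining its endpoints, so $e^{tD} \le \frac{b-D}{b-a}\,e^{ta} + \frac{D-a}{b-a}\,e^{tb}$; taking the conditional expectation and using $\mathbb{E}[D \mid \mathcal{G}] = 0$ gives $\mathbb{E}[e^{tD} \mid \mathcal{G}] \le \frac{b}{b-a}\,e^{ta} - \frac{a}{b-a}\,e^{tb} =: e^{\varphi(t)}$, and a short Taylor-with-remainder estimate (checking $\varphi(0) = \varphi'(0) = 0$ and $\varphi''(t) \le (b-a)^2/4$) yields $\varphi(t) \le t^2(b-a)^2/8$. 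This convexity-plus-second-derivative step is where essentially all the work is; everything afterward is bookkeeping.

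With Hoeffding's lemma in hand, I would run the exponential Markov inequality: for $t>0$,
$$\mathbb{P}(M_n - M_0 \ge A) \le e^{-tA}\,\mathbb{E}\bigl[e^{t(M_n-M_0)}\bigr] = e^{-tA}\,\mathbb{E}\Bigl[e^{t\sum_{k=1}^{n-1} D_k}\,\mathbb{E}\bigl[e^{tD_n} \mid X_1,\ldots,X_{n-1}\bigr]\Bigr],$$
and then peel off the conditional expectations one at a time, each bounded by Hoeffding's lemma with $(a,b)=(a_k,b_k)$, to obtain $\mathbb{E}[e^{t(M_n-M_0)}] \le \exp\bigl(\tfrac{t^2}{8}\sum_{k=1}^n (b_k-a_k)^2\bigr)$. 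Minimizing $\exp\bigl(-tA + \tfrac{t^2}{8}\sum_{k=1}^n(b_k-a_k)^2\bigr)$ over $t$ (optimum at $t = 4A/\sum_{k=1}^n(b_k-a_k)^2$) gives $\mathbb{P}(M_n - M_0 \ge A) \le \exp\bigl(-2A^2/\sum_{k=1}^n(b_k-a_k)^2\bigr)$; applying the same argument to $(-M_k)_k$ and adding the two one-sided bounds produces the two-sided inequality with its factor of $2$. Finally, in the symmetric case $a_k = -b_k$ we have $(b_k-a_k)^2 = 4b_k^2$, so the exponent collapses to $-A^2/(2\sum_{k=1}^n b_k^2)$; setting $\lambda = 2\exp(-A^2/(2\sum_{k=1}^n b_k^2))$ and solving for $A$ gives $A = \sqrt{\sum_{k=1}^n b_k^2}\,\sqrt{2\ln(2/\lambda)}$, which is exactly the stated reformulation.
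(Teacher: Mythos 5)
The paper does not prove this lemma at all: it is stated as a classical result and attributed to the reference \cite{azum}, so your observation that citing it is legitimate matches exactly what the authors do. Your self-contained sketch is the standard and correct proof (Hoeffding's lemma via convexity with $\varphi''(t)\le (b-a)^2/4$, Chernoff bounding with the conditional expectations peeled off by the tower property, optimization at $t=4A/\sum_k(b_k-a_k)^2$ giving the exponent $-2A^2/\sum_k(b_k-a_k)^2$, and the union of the two one-sided bounds for the factor $2$), and the final algebra in the symmetric case $a_k=-b_k$ correctly recovers $A=\sqrt{\sum_k b_k^2}\,\sqrt{2\ln(2/\lambda)}$.
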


\begin{theorem}
    \label{pairwise:mart-thm}
    For all $0 < \lambda <1$, the computed $\widehat{S}_1^h $ satisfies under SR-nearness
    \begin{equation}
        \label{pairwise:martingale-bound}
        \frac{\abs{\widehat{S}_1^h - S_1^h}}{\abs{S_1^h}} \leq \kappa \sqrt{u\gamma_{2\lceil \log(n) \rceil}(u)} \sqrt{
            \ln(2/ \lambda)},
    \end{equation}
    with probability at least $1-\lambda$.
\end{theorem}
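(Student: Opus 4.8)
The plan is to build a single martingale indexed by the $h$ levels of the summation tree and apply the symmetric form of Azuma--Hoeffding in Lemma~\ref{hoeffding}. For $0\le k\le h$, set $M_k=\sum_{l=1}^{2^{h-k}}\widehat{S}_l^k$, the sum of all partial results available once the $k$-th level has been computed. Then $M_0=\sum_{i=1}^{2^h}x_i=S_1^h$ and $M_h=\widehat{S}_1^h$, so that $\widehat{S}_1^h-S_1^h=M_h-M_0$. I would take as driving sequence $X_k=(\delta_1^k,\dots,\delta_{2^{h-k}}^k)$, the vector of the rounding errors introduced at level $k$, and order the elementary operations level by level so that every operation of levels $<k$ precedes every operation of level $k$.

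First I would check that $(M_0,\dots,M_h)$ is a martingale with respect to $(X_1,\dots,X_h)$. From the recursion $\widehat{S}_l^k=(\widehat{S}_{2l-1}^{k-1}+\widehat{S}_{2l}^{k-1})(1+\delta_l^k)$, the factors $\widehat{S}_{2l-1}^{k-1},\widehat{S}_{2l}^{k-1}$ depend only on $X_1,\dots,X_{k-1}$, and Lemma~\ref{meanindp} together with the level-by-level ordering gives $\mathbb{E}[\delta_l^k\mid X_1,\dots,X_{k-1}]=0$; hence by linearity $\mathbb{E}[M_k\mid X_1,\dots,X_{k-1}]=\sum_{l=1}^{2^{h-k}}(\widehat{S}_{2l-1}^{k-1}+\widehat{S}_{2l}^{k-1})=\sum_{j=1}^{2^{h-k+1}}\widehat{S}_j^{k-1}=M_{k-1}$, and integrability is immediate since everything is a finite sum of bounded variables.

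Next I would bound the increments deterministically. Again from the recursion, $M_k-M_{k-1}=\sum_{l=1}^{2^{h-k}}(\widehat{S}_{2l-1}^{k-1}+\widehat{S}_{2l}^{k-1})\delta_l^k$. Since $\abs{\delta_l^k}\le u$ surely and $\widehat{S}_j^{k-1}$ is a combination of the $x_i$ in the $j$-th block of size $2^{k-1}$ with coefficients of modulus at most $(1+u)^{k-1}$ (the expansion preceding~\eqref{eq:pairwise}), one gets $\abs{\widehat{S}_j^{k-1}}\le(1+u)^{k-1}\sum_{i\in\text{ block }j}\abs{x_i}$; summing these envelopes over all $2^{h-k+1}$ blocks at level $k-1$ yields $\abs{M_k-M_{k-1}}\le u(1+u)^{k-1}\norm{x}_1=:b_k$, so I may take $a_k=-b_k$ in Lemma~\ref{hoeffding}. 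A geometric sum then gives $\sum_{k=1}^h b_k^2=u^2\norm{x}_1^2\,\dfrac{(1+u)^{2h}-1}{(1+u)^2-1}=\dfrac{u}{2+u}\,\norm{x}_1^2\,\gamma_{2h}(u)\le\tfrac12\,\norm{x}_1^2\,u\,\gamma_{2h}(u)$, and the $\tfrac12$ cancels the $\sqrt2$ of Azuma--Hoeffding. Hence, with probability at least $1-\lambda$, $\abs{M_h-M_0}\le\norm{x}_1\sqrt{u\,\gamma_{2h}(u)}\,\sqrt{\ln(2/\lambda)}$; dividing by $\abs{S_1^h}$ and recalling that under the zero-padding reduction $h=\lceil\log(n)\rceil$ (and $h=\log(n)$ when $n$ is a power of two) gives exactly~\eqref{pairwise:martingale-bound} with $\kappa=\norm{x}_1/\abs{S_1^h}$.

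The main obstacle is the choice of filtration rather than the estimates: one has to see that filtering by tree \emph{levels}---so that $M_k$ is precisely the sum of the level-$k$ partial sums---produces a genuine martingale whose increment telescopes to $\sum_l(\widehat{S}_{2l-1}^{k-1}+\widehat{S}_{2l}^{k-1})\delta_l^k$, and that the correct deterministic envelope for this increment is $u(1+u)^{k-1}\norm{x}_1$, which does not grow with the number of leaves. Once this is in place, the geometric summation and the application of Lemma~\ref{hoeffding} are routine; a small point to watch is that the operation ordering used to invoke Lemma~\ref{meanindp} must be compatible with processing the tree level by level.
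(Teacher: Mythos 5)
Your proposal is correct and follows essentially the same route as the paper: a level-indexed martingale whose increment is $\sum_l(\widehat{S}_{2l-1}^{k-1}+\widehat{S}_{2l}^{k-1})\delta_l^k$, the deterministic envelope $u(1+u)^{k-1}\norm{x}_1$, the geometric summation, and Azuma--Hoeffding. The only (cosmetic) difference is that the paper centers the martingale as $M_k=\sum_i(\widehat{S}_i^k-S_i^k)$ with $M_0=0$, whereas you use the uncentered sum $\sum_i\widehat{S}_i^k$; since these differ by the constant $S_1^h$, the increments and the final bound are identical.
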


\begin{proof}
    Let us denote for $k > 0, \ M_k = \sum_{i=1}^{2^{h-k}} \widehat{S}_i^k - S_i^k $ and $M_0=0$. Then, $M_h = \widehat{S}_1^h - S_1^h $ and $M_k= M_{k-1} + \sum_{i=1}^{2^{h-k}}(\widehat{S}_{2i-1}^{k-1} + \widehat{S}_{2i}^{k-1})\delta_i^k$.  The $\delta_k$ are mean independent, therefore $M_0,\ldots,M_h$ form a martingale with respect to $\{\delta_i^k, \  1\leq i \leq 2^{h-k}, \ 1\leq k \leq h-1\}$. Moreover, Equation~(\ref{eq:pairwise}) yields
    \begin{align*}
        \abs{M_k - M_{k-1}} & \leq  \sum_{i=1}^{2^{h-k}} \abs{(\widehat{S}_{2i-1}^{k-1} + \widehat{S}_{2i}^{k-1})\delta_i^k}
        \leq u  \sum_{i=1}^{2^{h-k}} \abs{\widehat{S}_{2i-1}^{k-1} + \widehat{S}_{2i}^{k-1}}                                 \\
                            & \leq u (1+u)^{k-1} \sum_{i=1}^{2^{h-k}} \abs{ \sum_{m=2^{k-1} (2i-2)+1}^{2^{k-1}(2i-1)} x_m + \sum_{m=2^{k-1} (2i-1)+1}^{2^{k-1}(2i)} x_m  }\\
                            & \leq u (1+u)^{k-1} \sum_{i=1}^{2^{h-k}} \sum_{m=2^{k} (i-1)+1}^{2^{k}i} 
 \abs{x_m} = u (1+u)^{k-1} \sum_{i=1}^{2^{h}} \abs{x_m} \\
                            & = u (1+u)^{k-1} \norm{x}_1.
    \end{align*}
    Denote $C_k =u (1+u)^{k-1} \norm{x}_1  $, Azuma-Hoeffding inequality implies that with probability at least $1-\lambda$,
    $
        \abs{M_h} \leq \sqrt{\sum_{k=1}^h C_k^2} \sqrt{2\ln(2/ \lambda)}
    $. Now
    \begin{align*}
        \sum_{k=1}^h C_k^2 = u^2 \norm{x}_1^2 \sum_{k=1}^h (1+u)^{2(k-1)}
        = u^2 \norm{x}_1^2 \frac{(1+u)^{2h}-1}{(1+u)^2-1} =  u \norm{x}_1^2 \frac{\gamma_{2h}(u)}{u+2}.
    \end{align*}
    Since, $\frac{u}{u+2} \leq \frac{u}{2}$ and $h=\lceil \log(n) \rceil$, we have
    $ \abs{M_h} \leq \norm{x}_1 \sqrt{u \frac{\gamma_{2 \lceil \log(n) \rceil}(u)}{2}} \sqrt{2\ln(2/ \lambda)}$. Finally
    \begin{align*}
        \frac{\abs{\widehat{S}_1^h - S_1^h}}{\abs{S_1^h}} \leq \kappa  \sqrt{u\gamma_{2\lceil \log(n) \rceil}(u)} \sqrt{\ln(2/ \lambda)},
    \end{align*}
    with probability at least $1-\lambda$.
\end{proof}

\noindent\textbf{Comparison with Hallman and Ipsen's pairwise bound~\cite{hallman2022precision}.}
The probabilistic bound proposed in~\cite[cor, 2.14]{hallman2022precision} to the pairwise summation forward error is
\begin{equation}
\label{ipsen:bound}
     \frac{\abs{\widehat{S}_1^h - S_1^h}}{\abs{S_1^h}} \leq \kappa  u\sqrt{h} \sqrt{2\ln(2/\delta)} (1+\phi_{n,h,\eta}), 
\end{equation}
with probability at least $1-(\eta +\delta)$, where $h$  is the height of the computational tree and $\phi_{n,h,\eta} \equiv \lambda_{n,\eta} \sqrt{2h} u \exp{\left( \lambda_{n,\eta}^2 h u^2\right)}$ with $ \lambda_{n,\eta} \equiv \sqrt{2\ln(2n/\eta)}$.

Bound~(\ref{ipsen:bound}) uses two parameters $\delta$ and $\eta$, the sum of which is higher than the probability that the bound does not hold. We could not find any closed form describing the best value for $\eta$ and $\delta$ given that $\delta+\eta=\lambda$, and we doubt that such a form exists. This makes the choice of their values difficult and, to some extent, arbitrary. 

Taking $\delta+\eta=\lambda$ and $h=\log(n)$, comparing the two bounds adds up to comparing $\sqrt{\gamma_{2\log(n)}(u)}\sqrt{\ln(2/(\delta+\eta)}$ and $\sqrt{2uh} \sqrt{\ln(2/\delta)} (1+\phi_{n,h,\eta})$.
$\gamma_{2\log(n)}(u)>2hu$ and $\gamma_{2\log(n)}(u)=2hu+O((hu)^2)$, giving a short advantage to~(\ref{ipsen:bound}) regarding the first factor. However $\ln(2/(\delta+\eta))<\ln(2/\delta)$, and to close the gap, one needs to
take $\eta$ as small as possible, giving the advantage to~(\ref{pairwise:varince-bound}) for the second factor. In the third factor, taking $\eta\rightarrow0$ makes $\phi_{n, h, u}$ grow to $\infty$. Moreover, the term $\phi_{n, h, u}$ is in $O(hu)$, and thus grows more rapidly than the second order terms in bound~(\ref{pairwise:varince-bound}). All in all, the bound established in this paper avoids the use of two parameters with no easy way to choose their values and gives better asymptotic results than the one in~(\ref{ipsen:bound}).

\section{Bias analysis}
\label{sec:bias}

The unbiased nature of SR-nearness extends to various algorithms such as the inner product~\cite{theo21stocha} and Horner's rule~\cite{el2022positive}. Nevertheless, it fails to hold in the general case. In the sequel, we study two algorithms for computing the variance: textbook and two-pass.

\subsection{Textbook algorithm}
\label{sec:bias-textbook}
For $x \in \mathbb{R}^n$, let  $s =  \sum_{i=1}^n x_i$ and
$y=\sum_{i=1}^{n} x_i^2 - \frac{1}{n}s^2$. Using SR-nearness: 
\begin{itemize}
    \item The computed $\widehat{s}$ satisfies $\widehat{s}= \sum_{i=1}^n x_i  \prod_{k=\max(2, i)}^{n}(1+\delta_{k-1}) = \sum_{i=1}^n x_i \phi_i$ with $\phi_i =\prod_{k=\max(2, i)}^{n}(1+\delta_{k-1}) $ for all $1\leq i \leq n$.
    \item The computed $\widehat{y}$ satisfies
    \begin{equation}
    \label{eq:y}
        \widehat{y} = \sum_{i=1}^{n} x_i^2\psi_i - \frac{1}{n} \widehat{s}^2\psi_{n+1},
    \end{equation} 
\end{itemize}
where
$\psi_i=(1+\epsilon_i)\prod_{k=\max(2,i)}^{n+1}(1+\eta_k)$ and $\psi_{n+1}=(1+\epsilon_{n+1}) (1+\eta_{n+1})(1+\theta)$. For all $1\leq i \leq n+1$, $\epsilon_{i}$ and $\eta_{i}$ represent the rounding errors from the products and additions, respectively. $\theta$ represent the error of the division of $ \widehat{s}^2$ by $n$.

\begin{theorem}\label{bias:textbook}
    The quantities $\widehat{s}$ and $\widehat{y}$ satisfy under SR-nearness
    \begin{itemize}
        \item $E(\widehat{s}) = s$,
        \item $E(\widehat{y}) = y - \frac{1}{n}V(\widehat{s})$.
    \end{itemize}
\end{theorem}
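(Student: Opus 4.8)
The plan is to compute both expectations directly, using only linearity of expectation, the unbiasedness of error products under SR-nearness (Lemma~\ref{model}, item 1), and the mean-independence of the rounding errors (Lemma~\ref{meanindp}). First I would handle $E(\widehat{s})$: since $\widehat{s}=\sum_{i=1}^n x_i\phi_i$ with each $\phi_i=\prod_{k=\max(2,i)}^n(1+\delta_{k-1})$ an error product of the form covered by Lemma~\ref{model}, we get $E(\phi_i)=1$, and linearity gives $E(\widehat{s})=\sum_{i=1}^n x_i = s$.

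For the second identity I would write $E(\widehat{y}) = \sum_{i=1}^n x_i^2\,E(\psi_i) - \frac1n E(\widehat{s}^2\psi_{n+1})$. Each $\psi_i$ for $1\le i\le n$ is again an error product, so $E(\psi_i)=1$ and the first sum contributes $\sum_{i=1}^n x_i^2$. The heart of the proof is the cross term $E(\widehat{s}^2\psi_{n+1})$. Here $\widehat{s}^2$ is a deterministic function of the summation errors $\delta_1,\dots,\delta_{n-1}$ alone, whereas the three errors composing $\psi_{n+1}=(1+\epsilon_{n+1})(1+\eta_{n+1})(1+\theta)$ — the squaring of $\widehat{s}$, the division by $n$, and the final subtraction — are produced by operations performed \emph{after} $\widehat{s}$ has been formed. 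Applying Lemma~\ref{meanindp} in the order these three operations occur yields $\mathbb{E}[\psi_{n+1}\mid \delta_1,\dots,\delta_{n-1}]=E(\psi_{n+1})=1$, so the tower property gives $E(\widehat{s}^2\psi_{n+1})=E(\widehat{s}^2)$. Using $E(\widehat{s}^2)=V(\widehat{s})+E(\widehat{s})^2=V(\widehat{s})+s^2$, we conclude $E(\widehat{y})=\sum_{i=1}^n x_i^2-\frac1n\big(s^2+V(\widehat{s})\big)=y-\frac1n V(\widehat{s})$.

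The only delicate point — and the one I would state carefully — is the conditioning step for $E(\widehat{s}^2\psi_{n+1})$: one must make explicit that $\widehat{s}$ is a function of $\delta_1,\dots,\delta_{n-1}$ only, that $\epsilon_{n+1},\theta,\eta_{n+1}$ are downstream errors not entering $\widehat{s}$, and that Lemma~\ref{meanindp}, though phrased for a sequence of elementary operations, applies to the entire computation described just before the theorem (regardless of how the two running sums $\widehat{s}$ and $\sum_i\widehat{x_i^2}$ are interleaved, since those three errors are the last ones performed). Everything else is routine bookkeeping with linearity and the identity $V(X)=E(X^2)-E(X)^2$.
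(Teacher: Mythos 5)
Your proposal is correct and follows essentially the same route as the paper: linearity plus $E(\phi_i)=E(\psi_i)=1$ for the first sum, and conditioning on the upstream errors so that $E[\psi_{n+1}\mid\cdot]=1$ and the tower property give $E(\widehat{s}^2\psi_{n+1})=E(\widehat{s}^2)=s^2+V(\widehat{s})$. The only cosmetic difference is that the paper conditions on the full set $\mathbb{F}$ of all errors preceding $\epsilon_{n+1},\eta_{n+1},\theta$ rather than on $\delta_1,\dots,\delta_{n-1}$ alone, which sidesteps the (correctly flagged, and easily resolved) point about applying mean independence relative to a coarser conditioning set.
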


\begin{proof}
    The first item can be proved as in the first part of Theorem~\ref{parwise:var-thm} proof. For the second, we have, by expectation linearity, $E(\widehat{y}) =  \sum_{i=1}^{n} x_i^2E(\psi_i) - \frac{1}{n}E(\widehat{s}^2\psi_{n+1})$. Let $\mathbb{F}=\{ \delta_i, \epsilon_j, \eta_k, i\in\{1,\ldots,n-1\}, \ j\in\{1,\ldots,n\}, \ k \in\{2,\ldots,n\} \}$,
    the mean independence property implies that $E(\psi_i)=1$ for all $1 \leq i \leq n$ and $E[\psi_{n+1}/\mathbb{F}]=1$. Therefore, the law of total expectation $E(X)= E(E[X/Y])$  yields
    \begin{align*}
        E(\widehat{s}^2\psi_{n+1}) & = E\left( E[\widehat{s}^2\psi_{n+1}/\mathbb{F}] \right)
        = E\left(\widehat{s}^2 E[\psi_{n+1}/\mathbb{F}] \right)
        = E(\widehat{s}^2)                                                                   \\
                                   & = E(\widehat{s})^2 + V(\widehat{s})
        = s^2 + V(\widehat{s}).
    \end{align*}
    It follows that
    $
        E(\widehat{y}) = \sum_{i=1}^{n} x_i^2 - \frac{1}{n} s^2 -  \frac{1}{n}V(\widehat{s})= y -  \frac{1}{n}V(\widehat{s}).
    $
\end{proof}

\begin{remark}
    Lemma~\ref{model} gives $V(\phi_i) \leq \gamma_{n-1}(u^2)$, so~\cite[thm 3.2]{arar2022stochastic} shows that the bias satisfies
    $$\frac{1}{n}V(\widehat{s}) \leq \frac{1}{n} \norm{x}_1^2 \gamma_{n-1}(u^2) = y \mathcal{K}_1^2 \gamma_{n-1}(u^2).
    $$
    Thus $E(\widehat{y}) \geq y \left( 1- \mathcal{K}_1^2 \gamma_{n-1}(u^2)\right)$.
\end{remark}

\subsection{Two-pass algorithm}
Let $x_1, x_2,\ldots, x_n\in \mathbb{R}$, denote $m = \frac{1}{n} \sum_{i=1}^n x_i$ and $z=\sum_{i=1}^{n} (x_i -m)^2$. Using SR-nearness: 
\begin{itemize}
    \item The computed $\widehat{m}$ satisfies $\widehat{m}=\frac{1}{n} \sum_{i=1}^n x_i \prod_{k=\max(2, i)}^{n+1}(1+\delta_{k-1})$ with $\delta_n$ is the division error by $n$.
    \item The computed $\widehat{z}$ satisfies 
    \begin{equation}
        \label{eq:z}
        \widehat{z} = \sum_{i=1}^{n} (x_i -\widehat{m})^2\psi_i,
    \end{equation}  
\end{itemize}
 where
$\psi_i=(1+\epsilon_i)^2(1+\eta_i) \prod_{k=\max(2,i)}^n(1+\theta_k).$
For all $1 \leq i \leq n$, $\epsilon_i, \eta_i $ and $\theta_i$ represent the rounding errors of subtraction, square, and addition, respectively.
Let us denote $\varphi_i = (1+\epsilon_i)(1+\eta_i) \prod_{k=\max(2,i)}^n(1+\theta_k)$. Then $\psi_i = (1+\epsilon_i)\varphi_i$.

\begin{theorem}
    The quantities $\widehat{m}$ and $\widehat{z}$ satisfy under SR-nearness
    \begin{itemize}
        \item $E(\widehat{m}) = m$,
        \item $E(\widehat{z}) =  z + \frac{1}{n}V(\widehat{s}) + O(n u^2)$,
              where $\frac{1}{n}s = m$.
    \end{itemize}
\end{theorem}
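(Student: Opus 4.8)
The plan is to mirror the structure of the proof of Theorem~\ref{bias:textbook}, exploiting the mean-independence of the SR error terms and the law of total expectation. First I would handle $E(\widehat m)=m$: since $\widehat m = \tfrac1n\sum_i x_i\prod_{k=\max(2,i)}^{n+1}(1+\delta_{k-1})$ is linear in the $x_i$ and each error product has expectation $1$ by Lemma~\ref{model} (or directly by mean independence as in Theorem~\ref{parwise:var-thm}), expectation linearity gives $E(\widehat m)=\tfrac1n\sum_i x_i = m$. Note also that $n\widehat m = \widehat s$ up to the final division error, so $V(\widehat s)$ is essentially $n^2 V(\widehat m)$; this identification is what lets the two-pass bias be written in terms of $V(\widehat s)$.

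For the second item I would write $E(\widehat z) = \sum_{i=1}^n E\bigl((x_i-\widehat m)^2\psi_i\bigr)$ and peel off the error terms $\psi_i$ using conditioning. Let $\mathbb G$ denote the $\sigma$-algebra generated by all the rounding errors produced \emph{before} the squaring/accumulation phase, i.e.\ the errors $\delta_{k-1}$ entering $\widehat m$ together with the earlier $\epsilon_j,\eta_j,\theta_k$; then $(x_i-\widehat m)^2$ is $\mathbb G$-measurable (for the relevant index range) and $E[\psi_i\mid\mathbb G]=1$ by mean independence, so by the law of total expectation $E\bigl((x_i-\widehat m)^2\psi_i\bigr)=E\bigl((x_i-\widehat m)^2\bigr)$ up to the $O(u^2)$ terms coming from the innermost errors $\epsilon_i,\eta_i$ that multiply $(x_i-\widehat m)^2$ at the same stage. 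Care is needed here because $\psi_i=(1+\epsilon_i)\varphi_i$ and $\epsilon_i$ is generated when $x_i-\widehat m$ is formed; writing $\psi_i = \varphi_i + \epsilon_i\varphi_i$ and bounding the $\epsilon_i$-term as $O(u)$ relative, while its contribution to the sum is second order once combined with the remaining structure, is exactly the ``carefully separating the error terms'' step advertised in the introduction — and I expect this to be the main obstacle.

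Next I would compute $\sum_{i=1}^n E\bigl((x_i-\widehat m)^2\bigr)$ exactly. Expanding, $\sum_i (x_i-\widehat m)^2 = \sum_i x_i^2 - 2\widehat m\sum_i x_i + n\widehat m^2 = \sum_i x_i^2 - 2n m\widehat m + n\widehat m^2$, using $\sum_i x_i = nm$. Taking expectations and using $E(\widehat m)=m$ and $E(\widehat m^2)=m^2+V(\widehat m)$ gives $\sum_i x_i^2 - 2nm^2 + n m^2 + nV(\widehat m) = \sum_i x_i^2 - nm^2 + nV(\widehat m) = z + nV(\widehat m)$, since $z=\sum_i(x_i-m)^2 = \sum_i x_i^2 - nm^2$ exactly. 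Finally, identifying $nV(\widehat m)$ with $\tfrac1n V(\widehat s)$: because $\widehat s$ and $n\widehat m$ differ only by the final division rounding $(1+\delta_n)$, we have $V(n\widehat m) = V(\widehat s) + O(u^2)\cdot(\text{same scale})$, hence $nV(\widehat m) = \tfrac1n V(n\widehat m) = \tfrac1n V(\widehat s) + O(u^2)$. Collecting, $E(\widehat z) = z + \tfrac1n V(\widehat s) + O(u^2)$, with $\tfrac1n s = m$ as stated.

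The bookkeeping to be careful about is precisely which errors are ``early'' (hence $\mathbb G$-measurable and harmless under conditioning) versus which are ``late'' and entangled with the quantity being squared; getting the conditioning $\sigma$-algebra right so that $E[\psi_i\mid\mathbb G]=1$ holds for the bulk of the product while the leftover innermost factors contribute only $O(u^2)$ after summation is the delicate point. Everything else is linearity of expectation, the elementary identity $E(X^2)=E(X)^2+V(X)$, and the exact expansion of $\sum_i(x_i-\widehat m)^2$; these I would present compactly rather than belaboring. A final sanity check worth including: comparing with Theorem~\ref{bias:textbook}, the textbook bias is $-\tfrac1n V(\widehat s)$ and the two-pass bias is $+\tfrac1n V(\widehat s)$ at leading order, confirming the claim in the introduction that the two biases are equal at order $1$ but of opposite signs.
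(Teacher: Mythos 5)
Your proposal follows essentially the same route as the paper's proof: linearity of expectation, peeling off the late errors $\eta_i,\theta_k$ by conditioning and mean independence, the identity $E\bigl((x_i-\widehat m)^2\bigr)=(x_i-m)^2+V(\widehat m)$, and the reduction $nV(\widehat m)=\tfrac1n V(\widehat s)+O(u^2)$ via the final division error. The one point where you are vaguer than the paper is the entangled $\epsilon_i$: the paper expands $(1+\epsilon_i)^2=1+2\epsilon_i+\epsilon_i^2$ and kills the cross term \emph{exactly} via Lemma~\ref{meanindp} (so that $E\bigl((x_i-\widehat m)^2\,\epsilon_i\bigr)=0$), leaving only the $\epsilon_i^2=O(u^2)$ remainder — merely ``bounding the $\epsilon_i$-term as $O(u)$'' would not by itself yield the stated $O(u^2)$.
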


\begin{proof}
    The first item is similar to the first part of Theorem~\ref{parwise:var-thm} proof. For the second, we have by expectation linearity $E(\widehat{z}) =  \sum_{i=1}^{n} E\left( (x_i -\widehat{m})^2\psi_i \right)$. For all $1 \leq i \leq n$, let $\theta_1=0$ and
    $$
        \mathbb{F}_i=\{ \delta_j, \epsilon_k, \eta_l, \theta_l, \  j\in[1;n], \ k\in[1;i], \ \text{and} \ l\in[1;i-1] \}.$$
    The mean independence property implies that $E[(1+\eta_i)\prod_{k=\max(2,i)}^n(1+\theta_k)/ \mathbb{F}_i]=1$.
    Using the law of total expectation, we have
    \begin{align*}
        E\left( (x_i -\widehat{m})^2\psi_i \right) 
         & = E\left( E\left[(x_i -\widehat{m})^2 (1+\epsilon_i)^2 (1+\eta_i)\prod_{k=\max(2,i)}^n(1+\theta_k)/ \mathbb{F}_i\right] \right)   \\
         & = E\left( (x_i -\widehat{m})^2 (1+\epsilon_i)^2 E\left[(1+\eta_i)\prod_{k=\max(2,i)}^n(1+\theta_k)/ \mathbb{F}_i\right] \right)   \\
         & =  E\left( (x_i -\widehat{m})^2 (1+\epsilon_i)^2 \right)= E\left( (x_i -\widehat{m})^2 (1+2 \epsilon_i + \epsilon_i^2) \right) \\
         & = E\left( (x_i -\widehat{m})^2 (1+\epsilon_i^2) \right)  \qquad \text{by Lemma~\ref{meanindp}}                                 \\
         & =  E\left( (x_i -\widehat{m})^2 \right) + E\left( (x_i -\widehat{m})^2 \epsilon_i^2 \right)                                    \\
         & =  (x_i - m)^2 +  V(\widehat{m}) + E\left( (x_i -\widehat{m})^2 \epsilon_i^2 \right).
    \end{align*}
    It follows that
    \begin{align*}
        E(\widehat{z}) & =  \sum_{i=1}^{n} (x_i - m)^2 +  V(\widehat{m}) + E\left( (x_i -\widehat{m})^2 \epsilon_i^2 \right) \\
                       & = z + nV(\widehat{m}) + \sum_{i=1}^{n} E\left( (x_i -\widehat{m})^2 \epsilon_i^2 \right).
    \end{align*}
    Since $\widehat{m} = \frac{1}{n} (1+\delta_{n})\widehat{s} = \frac{1}{n} \widehat{s}+ \frac{1}{n} \delta_{n}\widehat{s}$ and  $\abs{\epsilon_i}^2, \abs{\delta_{n}}^2 \leq u^2$ for all $1 \leq i \leq n$, 
    $$
        V(\widehat{m}) = \frac{1}{n^2}V(\widehat{s}) + O(nu^2) \quad \text{and} \quad \sum_{i=1}^{n} E\left( (x_i -\widehat{m})^2 \epsilon_i^2 \right)= O(nu^2).
    $$
    Therefore
    $
        E(\widehat{z}) =  z + \frac{1}{n}V(\widehat{s}) + O(nu^2).
    $
\end{proof}

Interestingly, these two algorithms under SR have an opposed bias at the first order over $u$.

\begin{remark}
   Lemma~\ref{model} implies that $V(\widehat{m}) \leq \frac{1}{n^2} \norm{x}_1^2 \gamma_{n}(u^2)$. Then
    \begin{align*}
        E(\widehat{z}) & = z + nV(\widehat{m}) + \sum_{i=1}^{n} E\left( (x_i -\widehat{m})^2 \epsilon_i^2 \right)
        \leq z + nV(\widehat{m}) +  u^2 \sum_{i=1}^{n} E \left((x_i -\widehat{m})^2\right)                                    \\
                       & = z + nV(\widehat{m}) + u^2(z + nV(\widehat{m}))\\
        &\leq (1+u^2) (z +\frac{1}{n} \norm{x}_1^2 \gamma_{n}(u^2))                                                
                        = z(1+u^2) (1 +\mathcal{K}_1^2\gamma_{n}(u^2)).
    \end{align*}
\end{remark}

\section{Error analysis for algorithms with non-linear error}
\label{sec:error-non-lin}
This section examines SR for non-linear computations via the previous two algorithms. We use the two methods discussed in the introduction to estimate the forward error. In addition, a new approach based on Doob-Meyer decomposition is proposed for the textbook algorithm.
\subsection{BC method}
\label{sec:variance-method}
This section uses the BC method proposed in~\cite{arar2022stochastic} to provide a probabilistic bound on the forward error of both textbook and two-pass algorithms under SR-nearness.

\subsubsection{Textbook algorithm}
In order to estimate the forward errors of the textbook algorithm, compute
\begin{align*}
    \abs{ \widehat{y} - y } & = \abs{\sum_{i=1}^{n} x_i^2 (\psi_i -1) - \frac{1}{n} (\widehat{s}^2\psi_{n+1} - s^2) }
    \leq \abs{\sum_{i=1}^{n} x_i^2 (\psi_i -1)  } + \frac{1}{n} \abs{ \widehat{s}^2\psi_{n+1} - s^2}                                                                                 \\
                            & = \abs{\sum_{i=1}^{n} x_i^2 (\psi_i -1)  } + \frac{1}{n} \abs{\left((\widehat{s} -s) + s\right)^2\psi_{n+1} - s^2 }                                                \\
                            & \leq \abs{\sum_{i=1}^{n} x_i^2 (\psi_i -1)  } + \frac{1}{n} \left( \abs{ (\widehat{s} -s)^2\psi_{n+1} }+ 2 \abs{ s(\widehat{s} -s)\psi_{n+1} }+ \abs{ s^2(\psi_{n+1}-1)}\right).
\end{align*}
Let $\mathcal{B} = \abs{ (\widehat{s} -s)^2\psi_{n+1} }+ 2 \abs{ s(\widehat{s} -s)\psi_{n+1} }+ \abs{ s^2(\psi_{n+1}-1)}$, the following equation will be used in all proofs of the textbook forward errors
\begin{equation}
    \label{eq:textbook-model}
    \abs{ \widehat{y} - y } = \abs{\sum_{i=1}^{n} x_i^2 (\psi_i -1)  } + \frac{1}{n} \mathcal{B}.
\end{equation}

\begin{remark}
     To handle the non-linearity of errors, the key idea of this approach is to isolate terms of order $1$ in error the errors and then use the previous results on the inner product or summation. 
     This error could be decomposed otherwise. For instance,
     \begin{align*}
          \frac{1}{n} (\widehat{s}^2\psi_{n+1} - s^2) =  \frac{1}{n} (\widehat{s}^2\psi_{n+1} -\widehat{s}  s +\widehat{s}  s - s^2)
         =  \frac{1}{n} \left( \widehat{s}(\widehat{s}\psi_{n+1} -s)   + s (\widehat{s}  - s)\right).
     \end{align*}
     Then, we can apply the same properties on $(\widehat{s}\psi_{n+1} -s)$ and $(\widehat{s}  - s)$. The bounds are different but asymptotically equivalent when $nu \ll 1$. 
\end{remark}

The rounding errors accumulated in the whole process of this algorithm $\phi_i$ and $\psi_i$ satisfy for all $1 \leq i \leq n$,
$$
    \abs{ \phi_i }\leq (1+u)^{n+1-\max(2,i)}, \quad \abs{ \psi_i }\leq (1+u)^{n+3-\max(2,i)} \  \text{and} \ \abs{ \psi_{n+1} }\leq (1+u)^3.
$$
Let us compute the deterministic bound of this algorithm. We have
$$
    \abs{\sum_{i=1}^{n} x_i^2 (\psi_i -1)  } \leq \norm{x}_2^2  \gamma_{n+1}(u).
$$
Since $\abs{s} \leq \norm{x}_1$ and
$\abs{ \widehat{s} -s } = \abs{ \sum_{i=1}^n x_i (\phi_i -1)}\leq  \norm{x}_1 \gamma_{n-1}(u)$, 
\begin{align*}
    \mathcal{B} & \leq (1+u)^3 \norm{x}_1^2 \left(\gamma_{n-1}^2(u)  + 2 \gamma_{n-1}(u) \right) + \norm{x}_1^2 ((1+u)^3 -1) \\
               & = (1+u)^3 \norm{x}_1^2 \left(\gamma_{n-1}^2(u)  + 2 \gamma_{n-1}(u) +1\right) - \norm{x}_1^2               \\
               & = (1+u)^3 \norm{x}_1^2 \left(\gamma_{n-1}(u) +1 \right)^2 - \norm{x}_1^2                                   \\
               & =  \norm{x}_1^2 (1+u)^{2n+1} - \norm{x}_1^2 = \norm{x}_1^2 \gamma_{2n+1}(u).
\end{align*}
Finally
\begin{align}\label{det-bound-text}
    \frac{\abs{ \widehat{y} - y }}{\abs{ y }} \leq  \mathcal{K}_2^2 \gamma_{n+1}(u) + \mathcal{K}_1^2  \gamma_{2n+1}(u).
\end{align}

The following theorem presents a probabilistic bound of the forward error of this algorithm through the BC method.

\begin{theorem}
    \label{thm:var-textbook}
    For all $0 < \lambda <1$, the computed $\widehat{y}$ in Equation~(\ref{eq:y}) satisfies under SR-nearness
    $$  \frac{\abs{ \widehat{y} - y }}{\abs{ y }}
        \leq  \mathcal{K}_2^2 \sqrt{ 2\gamma_{n+1}(u^2)/ \lambda} + \mathcal{K}_1^2 \left( (1+u)^3 \big(\sqrt{ 2\gamma_{n-1}(u^2)/ \lambda}+1 \big)^2 -1\right),
    $$
    with probability at least $1-\lambda$.
\end{theorem}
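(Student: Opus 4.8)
The plan is to bound the two pieces of Equation~\eqref{eq:textbook-model} separately, applying the BC machinery from~\cite{arar2022stochastic} to each. For the first term $\abs{\sum_{i=1}^n x_i^2(\psi_i-1)}$, I would observe that this is exactly an inner-product-type expression in the squared inputs $x_i^2$ with error products $\psi_i$ satisfying $E(\psi_i)=1$ and, by Lemma~\ref{model}, $V(\psi_i)\le \gamma_{n+1}(u^2)$ (since $\psi_i$ is a product of at most $n+1$ error factors $(1+\epsilon_i)$ and $(1+\eta_k)$). Hence $E\big(\sum x_i^2\psi_i\big)=\sum x_i^2$ and, bounding the variance of the sum by $\big(\sum \abs{x_i}^2\sqrt{V(\psi_i)}\big)^2 \le \norm{x}_2^4\,\gamma_{n+1}(u^2)$, Bienaymé--Chebyshev gives $\abs{\sum x_i^2(\psi_i-1)}\le \norm{x}_2^2\sqrt{\gamma_{n+1}(u^2)/\mu}$ with probability at least $1-\mu$ for any $\mu\in(0,1)$.

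Next I would handle $\mathcal{B} = \abs{(\widehat s-s)^2\psi_{n+1}} + 2\abs{s(\widehat s-s)\psi_{n+1}} + \abs{s^2(\psi_{n+1}-1)}$. Since $\abs{\psi_{n+1}}\le(1+u)^3$ deterministically and $\abs{\psi_{n+1}-1}\le(1+u)^3-1$ deterministically, the only randomness that needs a probabilistic bound is $\abs{\widehat s-s}=\abs{\sum x_i(\phi_i-1)}$. Exactly as in Theorem~\ref{parwise:var-thm} (or~\cite[thm 3.2]{arar2022stochastic}), using $E(\phi_i)=1$ and $V(\phi_i)\le\gamma_{n-1}(u^2)$, Bienaymé--Chebyshev yields $\abs{\widehat s-s}\le\norm{x}_1\sqrt{\gamma_{n-1}(u^2)/\nu}$ with probability at least $1-\nu$. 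On that event, together with $\abs{s}\le\norm{x}_1$, one gets
\[
\mathcal{B}\le (1+u)^3\norm{x}_1^2\Big(\tfrac{\gamma_{n-1}(u^2)}{\nu} + 2\sqrt{\tfrac{\gamma_{n-1}(u^2)}{\nu}}\Big) + \norm{x}_1^2\big((1+u)^3-1\big)
= \norm{x}_1^2\Big((1+u)^3\big(\sqrt{\gamma_{n-1}(u^2)/\nu}+1\big)^2 - 1\Big),
\]
mirroring the algebraic collapse used in the deterministic bound~\eqref{det-bound-text}.

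Combining the two events via a union bound (or Lemma~\ref{lem:proba}), dividing by $\abs{y}$, and using $\norm{x}_2^2/\abs{y}=\mathcal{K}_2^2$ and $\norm{x}_1^2/(n\abs{y})=\mathcal{K}_1^2$ (so $\tfrac1n\norm{x}_1^2/\abs{y}=\mathcal{K}_1^2$), I would obtain a bound of the stated shape with the failure probability $\mu+\nu$; choosing $\mu=\nu=\lambda/2$ produces the factor $2/\lambda$ inside both square roots, which is precisely what appears in the statement. The main obstacle is organizational rather than deep: one must make sure the two Bienaymé--Chebyshev applications use independent enough (or at least jointly controllable) events so that the union bound is legitimate, and one must correctly count the number of error factors in $\psi_i$ versus $\phi_i$ so the $\gamma_{n+1}$ and $\gamma_{n-1}$ indices come out right; the $(1+u)^3$ prefactor from $\psi_{n+1}$ must be kept exact (not absorbed into the $\gamma$) to match the clean closed form. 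A minor subtlety is that $\widehat s^2\psi_{n+1}$ is genuinely nonlinear, so the decomposition $(\widehat s-s+s)^2$ in~\eqref{eq:textbook-model} is essential — we never apply BC directly to $\widehat s^2$, only to the order-one quantity $\widehat s-s$.
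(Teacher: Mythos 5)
Your proposal is correct and follows essentially the same route as the paper: the same decomposition via Equation~(\ref{eq:textbook-model}), Bienaymé--Chebyshev applied to the two order-one quantities $\sum_i x_i^2(\psi_i-1)$ and $\widehat s-s$ (the paper simply cites~\cite[sec 5.1]{arar2022stochastic} for these rather than rederiving them), the same algebraic collapse of $\mathcal{B}$ into $(1+u)^3\norm{x}_1^2(\sqrt{2\gamma_{n-1}(u^2)/\lambda}+1)^2-\norm{x}_1^2$, and the same combination via Lemma~\ref{lem:proba} with failure probabilities $\lambda/2$ each. Note only that your worry about the events being ``independent enough'' is unnecessary: Lemma~\ref{lem:proba} is a pure union-bound statement requiring no independence.
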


\begin{proof}
    Equation~(\ref{eq:textbook-model}) states that
    $\abs{ \widehat{y} - y } \leq \abs{\sum_{i=1}^{n} x_i^2 (\psi_i -1)  } + \frac{1}{n}\mathcal{B}$. 
     The quantities $\abs{\sum_{i=1}^{n} x_i^2 (\psi_i -1) }$ and $\abs{ \widehat{s} -s }$ represent the absolute errors of the inner product $\sum_{i=1}^{n} x_i^2$ of the vector $x$ by itself and the summation $s= \sum_{i=1}^{n} x_i$, respectively. Then~\cite[sec 5.1]{arar2022stochastic} proves that
    \begin{align*}
        \abs{\sum_{i=1}^{n} x_i^2 (\psi_i -1)  } \leq \norm{x}_2^2 \sqrt{ 2\gamma_{n+1}(u^2)/ \lambda} & &\text{with probability at least $1-\frac{\lambda}{2}$},\\
        \abs{ \widehat{s} -s }\leq  \norm{x}_1 \sqrt{2 \gamma_{n-1}(u^2)/ \lambda} & & \text{with probability at least $1-\frac{\lambda}{2}$}.
    \end{align*}
    Since, $\abs{\psi_{n+1}} \leq (1+u)^3$ and $\abs{s} \leq \norm{x}_1$, with probability at least $1-\frac{ \lambda}{2},$
    \begin{align*}
        \mathcal{B} & \leq (1+u)^3 \norm{x}_1^2 \left(2\gamma_{n-1}(u^2)/ \lambda + 2\sqrt{ 2 \gamma_{n-1}(u^2)/ \lambda} \right) + \norm{x}_1^2 \left((1+u)^3 -1\right) \\
                   & = (1+u)^3 \norm{x}_1^2 \left(2\gamma_{n-1}(u^2)/ \lambda + 2\sqrt{ 2\gamma_{n-1}(u^2)/ \lambda} +1 \right) - \norm{x}_1^2                          \\
                   & = (1+u)^3 \norm{x}_1^2 \left(\sqrt{ 2\gamma_{n-1}(u^2)/ \lambda} +1 \right)^2 - \norm{x}_1^2.
    \end{align*}
    Finally, Lemma~\ref{lem:proba} shows that with probability at least $1-\lambda$,
    \begin{align*}
        \frac{\abs{ \widehat{y} - y }}{\abs{ y }} & \leq \frac{1}{\abs{y}}\abs{\sum_{i=1}^{n} x_i^2 (\psi_i -1)  } + \frac{1}{n\abs{y}} \mathcal{B}                                                          \\
                                                  & \leq \mathcal{K}_2^2 \sqrt{ 2\gamma_{n+1}(u^2)/ \lambda} + \mathcal{K}_1^2 \left( (1+u)^3 \big(\sqrt{ 2\gamma_{n-1}(u^2)/ \lambda}+1 \big)^2 -1\right).
    \end{align*}

\end{proof}

\subsubsection{Two-pass algorithm}
\label{sec-var:two-pass}
As with the previous algorithm, we present a computational scheme for the proofs of the two-pass algorithm errors in this paper. One needs first to separate the errors of order two. Let us recall that $
    \psi_i = \varphi_i (1+\epsilon_i)$  for all $1 \leq i \leq n$. Therefore
\begin{align*}
    \abs{ \widehat{z} - z } & = \abs{\sum_{i=1}^n (x_i - \widehat{m})^2 \psi_i -  (x_i - m)^2 }                                                                                             \\
                            & = \abs{\sum_{i=1}^n (x_i - \widehat{m})^2 \varphi_i - (x_i - m)^2 + \sum_{i=1}^n  (x_i - \widehat{m})^2 \epsilon_i \varphi_i }                                \\
                            & \leq \abs{\sum_{i=1}^n  (x_i - \widehat{m})^2 \varphi_i -  (x_i - m)^2 } + u\abs{\sum_{i=1}^n  (x_i - \widehat{m})^2 \varphi_i }                              \\
                            & \leq \abs{\sum_{i=1}^n  (x_i - \widehat{m})^2 \varphi_i -  (x_i - m)^2 } + u \abs{\sum_{i=1}^n  (x_i - \widehat{m})^2 \varphi_i - (x_i - m)^2 } + u \abs{ z } \\
                            &= (1+u)\abs{\sum_{i=1}^n  (x_i - \widehat{m})^2 \varphi_i -  (x_i - m)^2 } + u \abs{z}.
\end{align*}
Since $ (x_i - \widehat{m}) = (x_i -m) + (m- \widehat{m}) $, 
\begin{align*}
    \abs{\sum_{i=1}^n  (x_i - \widehat{m})^2 \varphi_i -  (x_i - m)^2}
    \leq & \abs{  \sum_{i=1}^n (x_i - m)^2 (\varphi_i-1)}  + \abs{(m - \widehat{m})^2  \sum_{i=1}^n  \varphi_i} \\
         & + 2 \abs{(m - \widehat{m} ) \sum_{i=1}^n (x_i - m)(\varphi_i-1)},
\end{align*}
because $\sum_{i=1}^n (x_i - m)=0$. Denote $$\mathcal{C} = \abs{  \sum_{i=1}^n (x_i - m)^2 (\varphi_i-1)}  
          + 2 \abs{(m - \widehat{m} ) \sum_{i=1}^n (x_i - m)(\varphi_i-1)} + \abs{(m - \widehat{m})^2  \sum_{i=1}^n  \varphi_i}.$$
The following equation will be used in all proofs of the two-pass forward errors
\begin{equation}
    \label{eq:two-pass-model}
    \abs{ \widehat{z} - z } \leq (1+u) \mathcal{C} + u\abs{z}.
\end{equation}

The following theorem presents a probabilistic bound of the forward error of this algorithm through the BC method.

\begin{theorem}
    \label{thm:var-twopass}
    For all $0 < \lambda <1$, the computed $\widehat{z}$ in Equation~(\ref{eq:z}) satisfies under SR-nearness
    \begin{align*}
        \frac{\abs{ \widehat{z} - z }}{\abs{ z }}  \leq & (1+u) \left(\sqrt{\frac{4\gamma_{n+1}(u^2)}{\lambda}} + \frac{4\gamma_{n+1}(u^2)}{\lambda} \left( 2 \mathcal{K}_1 + \mathcal{K}_1^2  \left(\sqrt{ \frac{4\gamma_{n+1}(u^2)}{\lambda}} + 1\right)
            \right)  \right)  \\
            &+u,
    \end{align*}
    with probability at least $1-\lambda$.
\end{theorem}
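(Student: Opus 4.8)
The plan is to start from the master inequality~(\ref{eq:two-pass-model}), namely $\abs{\widehat z - z} \leq (1+u)\mathcal{C} + u\abs{z}$, and bound $\mathcal{C}$ term by term using the BC-type estimates already proved in~\cite{arar2022stochastic}. The three pieces of $\mathcal{C}$ are: $\abs{\sum_{i=1}^n (x_i-m)^2(\varphi_i-1)}$, which is the forward error of the inner product $\sum (x_i-m)^2$; the cross term $2\abs{(m-\widehat m)\sum_{i=1}^n (x_i-m)(\varphi_i-1)}$; and the quadratic term $\abs{(m-\widehat m)^2\sum_{i=1}^n \varphi_i}$. First I would record the building blocks. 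From the BC analysis of the inner product, $\abs{\sum_{i=1}^n(x_i-m)^2(\varphi_i-1)} \leq \norm{x-m\mathbf 1}_2^2\sqrt{4\gamma_{n+1}(u^2)/\lambda}$ with probability at least $1-\lambda/4$ (the $\varphi_i$ here carry on the order of $n+1$ error factors, hence the index $n{+}1$; the constant $4$ rather than $2$ because we will split the failure probability four ways). Note $\norm{x-m\mathbf 1}_2^2 = z$, which is what produces the leading $\sqrt{4\gamma_{n+1}(u^2)/\lambda}$ after dividing by $\abs z$. Similarly, for the plain sum $\sum_{i=1}^n(x_i-m)(\varphi_i-1)$ one gets $\abs{\sum_{i=1}^n(x_i-m)(\varphi_i-1)} \leq \norm{x-m\mathbf 1}_1\sqrt{4\gamma_{n+1}(u^2)/\lambda}$ with probability at least $1-\lambda/4$, and for $\sum_{i=1}^n\varphi_i$ one has $\abs{\sum_{i=1}^n\varphi_i} \leq n + \norm{\mathbf 1}_1\sqrt{4\gamma_{n+1}(u^2)/\lambda} = n\bigl(1 + \sqrt{4\gamma_{n+1}(u^2)/\lambda}\bigr)$ with probability at least $1-\lambda/4$. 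Finally, $\widehat m = \frac1n(1+\delta_n)\widehat s$ makes $\abs{m-\widehat m}$ essentially the BC bound on $\frac1n\abs{\widehat s - s}$ up to an $O(u)$ correction; absorbing that, $\abs{m-\widehat m} \leq \frac1n\norm{x}_1\sqrt{4\gamma_{n+1}(u^2)/\lambda}$ with probability at least $1-\lambda/4$ (adjusting the index from $n-1$ to $n+1$ only weakens the bound, so this is safe).

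Next I would assemble these. Conditioning on the intersection of the four events above — which by Lemma~\ref{lem:proba} (applied repeatedly, since it lets us both multiply the bounded quantities and add the failure probabilities) holds with probability at least $1-\lambda$ — I get
\begin{align*}
\mathcal{C} &\leq z\sqrt{\tfrac{4\gamma_{n+1}(u^2)}{\lambda}} + \tfrac{2}{n}\norm{x}_1\norm{x-m\mathbf 1}_1\tfrac{4\gamma_{n+1}(u^2)}{\lambda} + \tfrac{1}{n^2}\norm{x}_1^2\tfrac{4\gamma_{n+1}(u^2)}{\lambda}\cdot n\bigl(1+\sqrt{\tfrac{4\gamma_{n+1}(u^2)}{\lambda}}\bigr).
\end{align*}
Now I divide through by $\abs z$ and rewrite everything in terms of the condition numbers. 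Using $\norm{x-m\mathbf 1}_1 \leq \norm{x}_1$ (or, more carefully, $\norm{x-m\mathbf 1}_1\leq 2\norm{x}_1$ — whichever the paper's convention of $\mathcal{K}_1$ demands; since $\mathcal{K}_1 = \norm{x}_1/\sqrt{ny}$ the factor of $2$ in the statement suggests the bound $\tfrac1n\norm{x}_1\norm{x-m\mathbf 1}_1 \leq \tfrac1n\norm{x}_1^2 = \mathcal{K}_1^2 z$ is used, contributing the $2\mathcal{K}_1$ once one notes the extra $\sqrt z/\sqrt z$), one has $\tfrac{1}{n\abs z}\norm{x}_1\norm{x-m\mathbf 1}_1 = \mathcal{K}_1 \cdot \tfrac{\norm{x-m\mathbf 1}_1}{\sqrt{n\abs z}} \cdot \tfrac{\sqrt{n\abs z}}{\abs z}\cdot\tfrac{1}{\sqrt n}$, and since $\norm{x-m\mathbf 1}_1 \leq \sqrt n\,\norm{x-m\mathbf 1}_2 = \sqrt n\sqrt z$ by Cauchy–Schwarz, this is exactly $\mathcal{K}_1$. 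Likewise $\tfrac{1}{n^2\abs z}\norm{x}_1^2 = \mathcal{K}_1^2$. Substituting yields precisely
\begin{align*}
\frac{\mathcal{C}}{\abs z} \leq \sqrt{\tfrac{4\gamma_{n+1}(u^2)}{\lambda}} + \tfrac{4\gamma_{n+1}(u^2)}{\lambda}\Bigl(2\mathcal{K}_1 + \mathcal{K}_1^2\bigl(\sqrt{\tfrac{4\gamma_{n+1}(u^2)}{\lambda}}+1\bigr)\Bigr),
\end{align*}
and then~(\ref{eq:two-pass-model}) gives $\tfrac{\abs{\widehat z - z}}{\abs z} \leq (1+u)\tfrac{\mathcal{C}}{\abs z} + u$, which is the claimed bound, with total failure probability at most $\lambda$.

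The main obstacle I anticipate is getting the bookkeeping of the Cauchy–Schwarz reductions to match the exact constants in the statement — in particular ensuring that the cross term really contributes $2\mathcal{K}_1$ (and not $2\mathcal{K}_1^2$ or $2\mathcal{K}_1\mathcal{K}_2$) and that the $\gamma$-index is uniformly $n+1$ rather than mixing $n-1$, $n$, $n+1$ coming from $\widehat s$, $\widehat m$, and the $\varphi_i$ of different lengths. The clean way to force consistency is to over-bound every error product by one with $n+1$ factors (legitimate since $(1+u^2)^k$ is increasing in $k$ and $\gamma_k(u^2)\le\gamma_{n+1}(u^2)$ for the relevant $k\le n+1$), to uniformly allocate $\lambda/4$ to each of the four events and invoke Lemma~\ref{lem:proba} three times to union-bound, and to keep $\abs{\varphi_i}\leq(1+u)^{n+1}$ only where a deterministic bound on a product of $\varphi_i$ is needed — but here, crucially, every occurrence of a sum of $\varphi_i$ or $(\varphi_i-1)$ is handled probabilistically via the BC estimates, so no deterministic $(1+u)^{n}$ blowup enters and the bound stays $O(\sqrt n u)$ as promised. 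A secondary subtlety is the $\widehat m = \frac1n(1+\delta_n)\widehat s$ relation: the extra $\delta_n$ contributes only $O(u)$ relative error to $\abs{m-\widehat m}$, which is dominated by the $\sqrt{\gamma_{n+1}(u^2)/\lambda}$ term and can be silently absorbed, but I would state explicitly that this is where the clean $\frac1n\norm{x}_1\sqrt{4\gamma_{n+1}(u^2)/\lambda}$ form comes from.
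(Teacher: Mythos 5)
Your proposal takes essentially the same route as the paper's proof: the same decomposition of $\mathcal{C}$ into the three terms, the same four BC estimates each allocated failure probability $\lambda/4$ and combined via Lemma~\ref{lem:proba}, the same Cauchy--Schwarz step $\sum_{i}\abs{x_i-m}\le\sqrt{nz}$, and the same identification of $\mathcal{K}_1$ and $\mathcal{K}_1^2$, so the argument is correct and matches the paper. The only blemish is expository: the factor $2$ in $2\mathcal{K}_1$ comes solely from the cross term already present in $\mathcal{C}$ (no bound of the form $\sum_i\abs{x_i-m}\le 2\norm{x}_1$ is needed), and the identity you want is $\frac{\norm{x}_1^2}{n\abs{z}}=\mathcal{K}_1^2$ rather than $\frac{\norm{x}_1^2}{n^2\abs{z}}=\mathcal{K}_1^2$, the extra factor of $n$ being supplied by $\abs{\sum_i\varphi_i}\le n\bigl(1+\sqrt{4\gamma_{n+1}(u^2)/\lambda}\bigr)$ exactly as in your displayed estimate, which is correct.
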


\begin{proof}
    Equation~(\ref{eq:two-pass-model}) states that
    $\abs{ \widehat{z} - z } \leq (1+u) \mathcal{C} + u\abs{z}$,
    and $\abs{\sum_{i=1}^n  \varphi_i} \leq \abs{\sum_{i=1}^n  (\varphi_i -1)} + n$. The following~ quantities $\abs{\sum_{i=1}^n (x_i - m)^2 (\varphi_i-1)}$, $\abs{ \widehat{m}  - m }$,  $\abs{\sum_{i=1}^n (x_i - m) (\varphi_i-1)} $ and $\abs{\sum_{i=1}^n  (\varphi_i-1)} $ represent the absolute errors of the inner product of $x-m$ by itself $\sum_{i=1}^{n} (x_i -m)^2$, the average $m=\frac{1}{n} \sum_{i=1}^{n} x_i $, the summations $s= \sum_{i=1}^{n} (x_i -m)$ and $\sum_{i=1}^{n} 1$ respectively. Then~\cite[sec 5.1]{arar2022stochastic} proves that
    \begin{align*}
        \abs{\sum_{i=1}^n (x_i - m)^2 (\varphi_i-1)} & \leq \abs{ z }\sqrt{\frac{4\gamma_{n+1}(u^2)}{\lambda}}                   &  & \text{with probability at least $1-\frac{\lambda}{4}$}, \\
        \abs{ \widehat{m}  - m }                     & \leq \frac{1}{n}\norm{x}_1 \sqrt{\frac{4\gamma_{n}(u^2)}{\lambda}}        &  & \text{with probability at least $1-\frac{\lambda}{4}$}, \\
        \abs{\sum_{i=1}^n (x_i - m) (\varphi_i-1)}   & \leq \sum_{i=1}^n \abs{ x_i - m}\sqrt{\frac{4\gamma_{n+1}(u^2)}{\lambda}} &  & \text{with probability at least $1-\frac{\lambda}{4}$}, \\
        \abs{\sum_{i=1}^n  (\varphi_i-1)}            & \leq n \sqrt{\frac{4\gamma_{n+1}(u^2)}{\lambda}} +n                       &  & \text{with probability at least $1-\frac{\lambda}{4}$}.
    \end{align*}
    Using the Cauchy–Schwarz inequality, we obtain $$ \sum_{i=1}^n \abs{ x_i - m}\leq \sqrt{n\sum_{i=1}^n (x_i - m)^2} = \sqrt{n z}.$$
    Since $\gamma_n(u^2) \leq \gamma_{n+1}(u^2)$, Lemma~\ref{lem:proba} implies 
    \begin{align*}
        \mathcal{C} & \leq \abs{ z }\sqrt{ \frac{4\gamma_{n+1}(u^2)}{\lambda}} + 2 \frac{\norm{x}_1 }{n}\frac{4\gamma_{n+1}(u^2)}{\lambda}\sqrt{nz}
        + \frac{\norm{x}_1^2 }{n}\frac{4\gamma_{n+1}(u^2)}{\lambda} \left( \sqrt{ \frac{4\gamma_{n+1}(u^2)}{\lambda}} + 1\right)                           \\
            &=       \abs{ z }\sqrt{ \frac{4\gamma_{n+1}(u^2)}{\lambda}} +  \frac{4\gamma_{n+1}(u^2)}{\lambda} \left( 2 \abs{ z }\frac{\norm{x}_1}{\sqrt{nz}}
        + \frac{\norm{x}_1^2}{n} \left(\sqrt{ \frac{4\gamma_{n+1}(u^2)}{\lambda}} + 1\right)\right),
    \end{align*}
    with probability at least $1-\lambda$. Finally
    \begin{align*}
        \frac{\abs{ \widehat{z} - z }}{\abs{ z }}  \leq & (1+u) \left(\sqrt{\frac{4\gamma_{n+1}(u^2)}{\lambda}} + \frac{4\gamma_{n+1}(u^2)}{\lambda} \left( 2 \mathcal{K}_1 + \mathcal{K}_1^2  \left(\sqrt{ \frac{4\gamma_{n+1}(u^2)}{\lambda}} + 1\right)
            \right)  \right)  \\
            &+u,
    \end{align*}
    with probability at least $1-\lambda$,
\end{proof}

\subsection{AH method}
\label{sec:martingale-method}
This section uses the AH method proposed in~\cite{ilse} for the inner product and Lemma~\ref{lem:proba} to provide a probabilistic bound of the forward error of both textbook and two-pass algorithms under SR-nearness.

\subsubsection{Textbook algorithm}
\label{sec-mar:textbook}

\begin{theorem}
    \label{thm:mar-textbook}
    For all $0 < \lambda <1$, the computed $\widehat{y}$ in Equation~(\ref{eq:y}) satisfies under SR-nearness
    \begin{align*}
        \frac{\abs{ \widehat{y}  - y}}{\abs{ y}} \leq & \mathcal{K}_2^2 \sqrt{ u \gamma_{2(n+1)}(u) } \sqrt{\ln (4 / \lambda)}                                             \\
                                                      & + \mathcal{K}_1^2 \left( (1+u)^3 \big(\sqrt{ u \gamma_{2(n-1)}(u) } \sqrt{\ln (4 / \lambda)} +1 \big)^2 -1\right),
    \end{align*}
    with probability at least $1-\lambda$.
\end{theorem}

\begin{proof}
    Equation~(\ref{eq:textbook-model}) states that
    $\abs{ \widehat{y} - y } \leq \abs{\sum_{i=1}^{n} x_i^2 (\psi_i -1)  } + \frac{1}{n}\mathcal{B}.$
    Moreover,~\cite[cor 4.7]{ilse} shows that
    \begin{align*}
        \abs{  \sum_{i=1}^{n} x_i^2 (\psi_i -1) }  \leq \norm{x}_2^2 \sqrt{ u\gamma_{2(n+1)}(u) } \sqrt{\ln (4 / \lambda)} & & \text{with probability at least $1-\frac{\lambda}{2}$}, \\
        \abs{ \widehat{s} -s }                     \leq  \norm{x}_1 \sqrt{ u\gamma_{2(n-1)}(u) } \sqrt{\ln (4 / \lambda)} & & \text{with probability at least $1-\frac{\lambda}{2}$}.
    \end{align*}
    Since, $\abs{\psi_{n+1}} \leq (1+u)^3$ and $\abs{s} \leq \norm{x}_1$,we have with probability at least $1-\frac{ \lambda}{2}$,
    \begin{align*}
        \scriptsize \mathcal{B} \leq & (1+u)^3 \norm{x}_1^2 \left( u\gamma_{2(n-1)}(u) \ln (4 / \lambda) + 2\sqrt{ u\gamma_{2(n-1)}(u) } \sqrt{\ln (4 / \lambda)} \right)                 \\
                                    & + \norm{x}_1^2 \left((1+u)^3 -1\right)                                                                                                             \\
        =                           & (1+u)^3 \norm{x}_1^2 \left(u\gamma_{2(n-1)}(u) \ln (4 / \lambda)+ 2\sqrt{ u\gamma_{2(n-1)}(u) } \sqrt{\ln (4 / \lambda)} +1 \right) - \norm{x}_1^2 \\
        =                           & (1+u)^3 \norm{x}_1^2 \left(\sqrt{ u\gamma_{2(n-1)}(u) } \sqrt{\ln (4 / \lambda)} +1 \right)^2 - \norm{x}_1^2.
    \end{align*}
     Finally, Lemma~\ref{lem:proba} shows that with probability at least $1-\lambda$,
    \begin{align*}
        \frac{\abs{ \widehat{y} - y }}{\abs{ y }} \leq & \mathcal{K}_2^2 \sqrt{ u\gamma_{2(n+1)}(u) } \sqrt{\ln (4 / \lambda)}                                                 \\
                                                       & + \mathcal{K}_1^2 \left( (1+u)^3 \left(\sqrt{ u\gamma_{2(n-1)}(u) } \sqrt{\ln (4 / \lambda)} + 1 \right)^2 -1\right).
    \end{align*}
    
\end{proof}

\subsubsection{Two-pass algorithm}
\label{sec-mar:two-pass}

\begin{theorem}
    For all $0 < \lambda <1$, the computed $\widehat{z}$ in Equation~(\ref{eq:z}) satisfies under SR-nearness
    \begin{align*}
        \frac{\abs{ \widehat{z} - z }}{\abs{ z }} \leq & (1+u) \Bigg( \sqrt{ u\gamma_{2(n+1)}(u) } \sqrt{\ln (8 / \lambda)} \\
                                                       & + u\gamma_{2(n+1)}(u)\ln (8 / \lambda) \left( 2 \mathcal{K}_1
            + \mathcal{K}_1^2   \big( \sqrt{ u\gamma_{2(n+1)}(u) } \sqrt{\ln (8 / \lambda)} + 1\big)
            \right) \Bigg)  +u,
    \end{align*}
    with probability at least $1-\lambda$.

\end{theorem}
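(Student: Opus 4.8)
The plan is to follow exactly the template established in the proof of Theorem~\ref{thm:var-twopass} (the BC two-pass bound), but substitute the AH inner-product bounds from~\cite{ilse} in place of the BC ones, as was done in passing from Theorem~\ref{thm:var-textbook} to Theorem~\ref{thm:mar-textbook}. Concretely, I would start from the computational scheme~(\ref{eq:two-pass-model}), namely $\abs{\widehat{z}-z}\leq(1+u)\mathcal{C}+u\abs{z}$, and from the decomposition of $\mathcal{C}$ into the three pieces $\abs{\sum_i (x_i-m)^2(\varphi_i-1)}$, $2\abs{(m-\widehat{m})\sum_i(x_i-m)(\varphi_i-1)}$, and $\abs{(m-\widehat{m})^2\sum_i\varphi_i}$, together with the observation $\abs{\sum_i\varphi_i}\leq\abs{\sum_i(\varphi_i-1)}+n$.

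Next I would invoke~\cite[cor 4.7]{ilse} to bound each of the four constituent quantities — the inner product $\sum_i(x_i-m)^2$, the mean $m$, the summation $\sum_i(x_i-m)$, and $\sum_i 1$ — each with probability at least $1-\lambda/4$ and with the AH-type factor $\sqrt{u\gamma_{2(n+1)}(u)}\sqrt{\ln(8/\lambda)}$ (the $8$ rather than $4$ arising because we now split the failure probability four ways into events of mass $\lambda/4$, so $4/(\lambda/2)=8/\lambda$ appears inside the logarithm, matching the pattern $\ln(4/\lambda)$ for two splits in Theorem~\ref{thm:mar-textbook}). Then, using Cauchy–Schwarz as before to write $\sum_i\abs{x_i-m}\leq\sqrt{nz}$, and using $\abs{\widehat{m}-m}\leq\frac1n\norm{x}_1\sqrt{u\gamma_{2n}(u)}\sqrt{\ln(8/\lambda)}\leq\frac1n\norm{x}_1\sqrt{u\gamma_{2(n+1)}(u)}\sqrt{\ln(8/\lambda)}$, I would assemble the three pieces of $\mathcal{C}$, squaring the $\abs{\widehat{m}-m}$ factor where it appears quadratically. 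Applying Lemma~\ref{lem:proba} to combine the four sub-events (a union bound on the complements) gives a statement holding with probability at least $1-\lambda$. Dividing through by $\abs{z}$, replacing $\norm{x}_1^2/(nz)$ by $\mathcal{K}_1^2$ and $\norm{x}_1/\sqrt{nz}$ by $\mathcal{K}_1$, and finally multiplying by $(1+u)$ and adding the residual $u$ from~(\ref{eq:two-pass-model}), yields the claimed bound.

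The routine part is the algebra of collecting terms; the only point requiring a little care is the bookkeeping of the probability budget and of the arguments of $\gamma$. In particular one must check that all four tail bounds can be taken with the common factor $\gamma_{2(n+1)}(u)$ (using monotonicity $\gamma_{2n}(u)\leq\gamma_{2(n+1)}(u)$ for the mean term), and that the union of four failure events of mass $\lambda/4$ gives total failure mass at most $\lambda$, so that Lemma~\ref{lem:proba} applies verbatim. I expect the main obstacle to be purely presentational: ensuring that the nested square-and-add structure of $\mathcal{C}$ is displayed so that the final expression matches the stated theorem, especially the factored form $\mathcal{K}_1^2\big(\sqrt{u\gamma_{2(n+1)}(u)}\sqrt{\ln(8/\lambda)}+1\big)$ inside the parentheses. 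No new idea beyond what was used for Theorems~\ref{thm:var-twopass} and~\ref{thm:mar-textbook} is needed.
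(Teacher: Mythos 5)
Your proposal matches the paper's own proof essentially step for step: start from~(\ref{eq:two-pass-model}), bound the four constituent quantities via~\cite[cor 4.7]{ilse} each with failure probability $\lambda/4$ (hence the $\ln(8/\lambda)$), use Cauchy--Schwarz for $\sum_i\abs{x_i-m}\leq\sqrt{nz}$ and $\gamma_{2n}(u)\leq\gamma_{2(n+1)}(u)$, and combine with Lemma~\ref{lem:proba}. The approach and all the key ingredients are the same as in the paper.
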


\begin{proof}
    Equation~(\ref{eq:two-pass-model}) states that
    $\abs{ \widehat{z} - z } \leq (1+u) \mathcal{C} + u\abs{z}.$
    Note that $\abs{\sum_{i=1}^n  \varphi_i} \leq \abs{\sum_{i=1}^n  (\varphi_i -1)} + n$ and~\cite[cor 4.7]{ilse} shows that each of the following inequalities holds with probability at least $1-\frac{\lambda}{4}$:
    \begin{align*}
        \abs{\sum_{i=1}^n (x_i - m)^2 (\varphi_i-1)} & \leq \abs{ z } \sqrt{ u \gamma_{2(n+1)}(u) } \sqrt{\ln (8 / \lambda)},                  \\
        \abs{ \widehat{m}  - m }                     & \leq \frac{1}{n}\norm{x}_1  \sqrt{ u \gamma_{2n}(u) } \sqrt{\ln (8 / \lambda)},         \\
        \abs{\sum_{i=1}^n (x_i - m) (\varphi_i-1)}   & \leq \sum_{i=1}^n \abs{ x_i - m}\sqrt{ u \gamma_{2(n+1)}(u) } \sqrt{\ln (8 / \lambda)}, \\
        \abs{\sum_{i=1}^n  (\varphi_i -1)}           & \leq n  \sqrt{ u \gamma_{2(n+1)}(u) } \sqrt{\ln (8 / \lambda)}.       
    \end{align*}
    By the Cauchy–Schwarz inequality, 
    $
        \sum_{i=1}^n \abs{ x_i - m}\leq \sqrt{n\sum_{i=1}^n (x_i - m)^2} = \sqrt{nz}
    $. Since $\gamma_{2n}(u) \leq \gamma_{2(n+1)}(u)$, Lemma~\ref{lem:proba} implies 
    \begin{align*}
        \mathcal{C} \leq & \abs{ z } \sqrt{ u \gamma_{2(n+1)}(u)} \sqrt{ \ln (8 / \lambda)} + 2 \frac{\norm{x}_1}{n} u\gamma_{2(n+1)}(u)   \ln (8 / \lambda) \sqrt{nz}          \\
                        & + \frac{\norm{x}_1^2}{n^2} u\gamma_{2(n+1)}(u)  \ln (8 / \lambda) \left(n \sqrt{ u\gamma_{2(n+1)}(u) } \sqrt{ \ln (8 / \lambda)} + n\right)           \\
        =               & \abs{ z } \sqrt{ u\gamma_{2(n+1)}(u) } \sqrt{\ln (8 / \lambda)} + u\gamma_{2(n+1)}(u) \ln (8 / \lambda)\Bigg( 2 \abs{ z }\frac{\norm{x}_1}{\sqrt{nz}} \\
                        & + \frac{\norm{x}_1^2}{n} \Big( \sqrt{ u\gamma_{2(n+1)}(u) } \sqrt{ \ln (8 / \lambda)} + 1\Big)\Bigg),
    \end{align*}
    with probability at least $1-\lambda$, Finally
    \begin{align*}
        \frac{\abs{ \widehat{z} - z }}{\abs{ z }} \leq & (1+u) \Bigg( \sqrt{ u\gamma_{2(n+1)}(u) } \sqrt{\ln (8 / \lambda)} \\
                                                       & + u\gamma_{2(n+1)}(u)\ln (8 / \lambda) \left( 2 \mathcal{K}_1
            + \mathcal{K}_1^2   \big( \sqrt{ u\gamma_{2(n+1)}(u) } \sqrt{\ln (8 / \lambda)} + 1\big)
            \right) \Bigg)  +u,
    \end{align*}
    with probability at least $1-\lambda$.
\end{proof}

\subsubsection{Textbook algorithm and Doob-Meyer decomposition}
\label{sec-doob:textbook}
This work introduces a new approach based on Doob–Meyer decomposition~\cite[p 68]{dacunha2012probability} to bound the forward error of the textbook algorithm.
To apply this method, we study
$$\widehat{s}= \sum_{i=1}^n x_i  \prod_{k=\max(2, i)}^{n}(1+\delta_{k-1}).$$
Consider $s_1 = x_1, \ s_{k} = s_{k-1} +x_{k}$ and $\widehat s_1 = x_1, \ \widehat{s}_{k} = (\widehat s_{k-1} +x_{k})(1+\delta_{k-1}) $ for all $2 \leq k \leq n$. Then $s_{n}= s $ and $\widehat{s}_{n}= \widehat{s}$.
Denote $Z_k = \widehat{s}_{k} - s_k = Z_{k-1} + (\widehat{s}_{k-1} +x_k)\delta_{k-1}$. Then, $Z_n= \widehat{s}_{n} - s_n$. By mean independence of $ \delta_k$,  $Z_1,\ldots,Z_n$ form a martingale with respect to $\delta_1,\ldots,\delta_{n-1}$. Then, $Z_1 +s,\ldots,Z_n +s$ is also a martingale.
Denote:
\begin{itemize}
    \item $\mathbb{F}_k= \{\delta_1, \ldots, \delta_k\}$.
    \item $Y_{k-1} = Z_k - Z_{k-1} = (\widehat{s}_{k-1} +x_k)\delta_{k-1}$ for all $2\leq  k \leq n$. Then $Z_n = \sum_{k=2}^n Y_{k-1}$.
    \item $\sigma_{k-1}^2 = E[Y_{k-1}^2/\mathbb{F}_{k-2}]$.
    \item $A_n = \sum_{k=2}^n \sigma_{k-1}^2$ with $A_1 =0$.
\end{itemize}
On one hand, $A_n$ is predictable:
\begin{align*}
		E[A_n/\mathbb{F}_{n-1}] & = E\left[\sum_{k=2}^n \sigma_{k-1}^2/\mathbb{F}_{n-1}\right]\\
		& = E\left[\sum_{k=2}^n E\left[Y_{k-1}^2/\mathbb{F}_{k-2}\right]/\mathbb{F}_{n-1}\right]            \\
		& = \sum_{k=2}^n E[ E[Y_{k-1}^2/\mathbb{F}_{k-2}]/\mathbb{F}_{n-1}].
	\end{align*}
	Since $E[Y_{k-1}^2/\mathbb{F}_{k-2}]$ is $\mathbb{F}_{k-2}$-measurable, so it is $\mathbb{F}_{n-1}$-measurable, and for all $2 \leq k \leq n$, we have $ E[ E[Y_{k-1}^2/\mathbb{F}_{k-2}]/\mathbb{F}_{n-1}] =  E[Y_{k-1}^2/\mathbb{F}_{k-2}]$ . Then
    \begin{align*}
		E[A_n/\mathbb{F}_{n-1}]	= \sum_{k=2}^n E[Y_{k-1}^2/\mathbb{F}_{k-2}]   
		= A_n.
    \end{align*}
On the other hand, $X_n = (Z_n +s)^2 - A_n - s^2$ is a martingale:
\begin{align*}
    E[X_n/\mathbb{F}_{n-1}] &=  E[(Z_n +s)^2 - A_n -s^2/\mathbb{F}_{n-1}] \\
    &=  E[(Z_{n-1} +s + Y_{n-1})^2 /\mathbb{F}_{n-1}] - A_n -s^2\\
    &= (Z_{n-1}+s)^2 + 2 (Z_{n-1}+s) E[Y_{n-1}/\mathbb{F}_{n-1}] +  E[Y_{n-1}^2/\mathbb{F}_{n-1}] - A_n -s^2\\
    &= X_{n-1} \quad \text{because} \ E[Y_{n-1}/\mathbb{F}_{n-1}]  =0.
\end{align*}
The expression of $(Z_n +s)^2 =X_n + s^2 + A_n $ is a Doob-Meyer decomposition.

\begin{lemma}\label{cst-bound}
    The martingale $X_1,\ldots, X_{n}$ satisfies
    $\abs{X_k - X_{k-1}} \leq u C_k $, for all $2\leq k \leq n$,
    where
    $$ C_k = \norm{x}_1^2 (1+u)^{2(k-2)} (2+u). $$
\end{lemma}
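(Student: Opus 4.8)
\textbf{Proof proposal for Lemma~\ref{cst-bound}.}

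The plan is to bound the martingale increment $X_k - X_{k-1}$ directly from the Doob--Meyer decomposition $X_k = (Z_k+s)^2 - A_k - s^2$. First I would write
$$X_k - X_{k-1} = (Z_k+s)^2 - (Z_{k-1}+s)^2 - (A_k - A_{k-1}),$$
and substitute $Z_k = Z_{k-1} + Y_{k-1}$ with $Y_{k-1} = (\widehat s_{k-1}+x_k)\delta_{k-1}$ and $A_k - A_{k-1} = \sigma_{k-1}^2 = Y_{k-1}^2$ (using the predictability computation from above). Expanding the square gives
$$X_k - X_{k-1} = 2(Z_{k-1}+s)Y_{k-1} + Y_{k-1}^2 - Y_{k-1}^2 = 2(Z_{k-1}+s)Y_{k-1} = 2\,\widehat s_{k-1}\,(\widehat s_{k-1}+x_k)\,\delta_{k-1},$$
since $Z_{k-1}+s = \widehat s_{k-1}$. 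So the cancellation of the quadratic term is the whole point of using the Doob--Meyer compensator, and the increment reduces to a single product.

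Next I would bound each factor in absolute value. We have $\abs{\delta_{k-1}} \le u$ from~\eqref{fl(xopy)}. For $\widehat s_{k-1}$, the recursion $\widehat s_j = (\widehat s_{j-1}+x_j)(1+\delta_{j-1})$ expands (as in Section~\ref{sec-doob:textbook}) to $\widehat s_{k-1} = \sum_{i=1}^{k-1} x_i \prod_{j=\max(2,i)}^{k-1}(1+\delta_{j-1})$, hence $\abs{\widehat s_{k-1}} \le \sum_{i=1}^{k-1}\abs{x_i}(1+u)^{k-1-\max(2,i)} \le (1+u)^{k-2}\norm{x}_1$, using $\max(2,i) \ge i \ge 1$ and bounding the number of factors by $k-2$. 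Then $\abs{\widehat s_{k-1} + x_k} \le \abs{\widehat s_{k-1}} + \abs{x_k} \le (1+u)^{k-2}\norm{x}_1 + \norm{x}_1 \le \big((1+u)^{k-2}+1\big)\norm{x}_1$; I would then over-estimate $(1+u)^{k-2}+1 \le (1+u)^{k-2}(2+u)$ — this holds because $1 \le (1+u)^{k-2}(1+u)$ for $k\ge 2$, i.e.\ $(1+u)^{k-2}(2+u) = (1+u)^{k-2} + (1+u)^{k-1} \ge (1+u)^{k-2} + 1$. Multiplying the three bounds:
$$\abs{X_k - X_{k-1}} \le 2\cdot(1+u)^{k-2}\norm{x}_1 \cdot (1+u)^{k-2}(2+u)\norm{x}_1 \cdot u \cdot \tfrac12 \cdot 2,$$
wait—more carefully, $\abs{X_k-X_{k-1}} = 2\abs{\widehat s_{k-1}}\,\abs{\widehat s_{k-1}+x_k}\,\abs{\delta_{k-1}} \le 2u(1+u)^{k-2}\norm{x}_1 \cdot \big((1+u)^{k-2}+1\big)\norm{x}_1$, and with the cleaner grouping $\abs{\widehat s_{k-1}+x_k}\le (1+u)^{k-2}(2+u)\norm{x}_1$ one should instead pair so the exponent comes out to $2(k-2)$: bounding $\abs{\widehat s_{k-1}}\le(1+u)^{k-2}\norm{x}_1$ directly may be slightly loose, so I would verify that in fact $\abs{\widehat s_{k-1}}(2) \le \norm{x}_1(1+u)^{k-2}\cdot$const reconciles with $C_k = \norm{x}_1^2(1+u)^{2(k-2)}(2+u)$; the natural route is $\abs{X_k-X_{k-1}} \le u\cdot \abs{\widehat s_{k-1}}\cdot 2\abs{\widehat s_{k-1}+x_k}$ and then bound $2\abs{\widehat s_{k-1}}\abs{\widehat s_{k-1}+x_k} \le \norm{x}_1^2(1+u)^{2(k-2)}(2+u)$ by writing $\widehat s_{k-1}+x_k = \widehat s_k/(1+\delta_{k-1})$ or simply absorbing the extra $\norm{x}_1$ and the factor $2$ into $(2+u)$ after pulling out $(1+u)^{2(k-2)}$.

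The main obstacle is purely bookkeeping: getting the exponent exactly $2(k-2)$ and the constant exactly $(2+u)$ rather than something weaker like $(2+u)(1+u)$ or $4$. The cleanest fix I anticipate is to bound $\abs{\widehat s_{k-1}} \le (1+u)^{k-2}\norm{x}_1$ for the first factor and $\abs{\widehat s_{k-1}+x_k} \le (1+u)^{k-2}\norm{x}_1 + \norm{x}_1$ for the second, then combine $2\big((1+u)^{2(k-2)} + (1+u)^{k-2}\big) \le (1+u)^{2(k-2)}(2+u)$?—that inequality is false in general, so the correct grouping must instead use that $\widehat s_{k-1}+x_k$ has only $k-2$ accumulated error factors while $\widehat s_{k-1}$ also has $k-2$, giving product exponent $2(k-2)$, and the $2$ together with the single extra $\norm{x}_1\cdot(1+u)^{-(k-2)}$-type slack gets absorbed: concretely $2\abs{\widehat s_{k-1}+x_k} \le 2\norm{x}_1\big((1+u)^{k-2}+1\big) \le \norm{x}_1(1+u)^{k-2}(2+u)\cdot\frac{2((1+u)^{k-2}+1)}{(1+u)^{k-2}(2+u)}$, and one checks the trailing fraction is $\le 1$ for $k\ge 2$ since $2(1+u)^{k-2}+2 \le (2+u)(1+u)^{k-2}\cdot\frac{?}{}$—this needs $2+2(1+u)^{-(k-2)} \le 2+u$, i.e.\ $2(1+u)^{-(k-2)}\le u$, which fails for small $k$. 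Hence the honest statement is that the bound as written is the intended one and the verification amounts to the identity $X_k-X_{k-1} = 2\widehat s_{k-1}(\widehat s_{k-1}+x_k)\delta_{k-1}$ followed by $\abs{\widehat s_{k-1}}\le(1+u)^{k-2}\norm{x}_1$, $\abs{\widehat s_{k-1}+x_k}\le\frac{(1+u)^{k-2}(2+u)}{2}\norm{x}_1$ — the latter being exactly what makes everything fit, and which I would establish by noting $\widehat s_{k-1}+x_k = \widehat s_k/(1+\delta_{k-1})$ with $\abs{\widehat s_k}\le(1+u)^{k-1}\norm{x}_1$ and $\abs{1+\delta_{k-1}}\ge 1-u$, so $\abs{\widehat s_{k-1}+x_k}\le\frac{(1+u)^{k-1}}{1-u}\norm{x}_1$ and checking $\frac{(1+u)^{k-1}}{1-u}\le\frac{(1+u)^{k-2}(2+u)}{2}$, i.e.\ $\frac{2(1+u)}{1-u}\le 2+u$, which does hold for small $u$ — this is the one genuinely delicate estimate and I would state it with the $nu\ll 1$ caveat if needed.
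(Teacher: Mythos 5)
There is a genuine gap, and it originates in the very first step. You assert that $A_k - A_{k-1} = \sigma_{k-1}^2 = Y_{k-1}^2$, so that the quadratic term cancels exactly and the increment collapses to $2(Z_{k-1}+s)Y_{k-1}$. But the compensator is defined by $\sigma_{k-1}^2 = E[Y_{k-1}^2/\mathbb{F}_{k-2}] = (\widehat s_{k-1}+x_k)^2\,E[\delta_{k-1}^2/\mathbb{F}_{k-2}]$, a \emph{conditional expectation}, not $Y_{k-1}^2$ itself: given $\mathbb{F}_{k-2}$, the rounding error $\delta_{k-1}$ is still random, so $E[\delta_{k-1}^2/\mathbb{F}_{k-2}]\neq\delta_{k-1}^2$ in general. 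The correct increment is
\begin{align*}
X_k - X_{k-1} = 2(Z_{k-1}+s)(\widehat s_{k-1}+x_k)\delta_{k-1} + (\widehat s_{k-1}+x_k)^2\left(\delta_{k-1}^2 - E[\delta_{k-1}^2/\mathbb{F}_{k-2}]\right),
\end{align*}
and the second term, bounded by $u^2(\widehat s_{k-1}+x_k)^2 \leq u^2(1+u)^{2(k-2)}\norm{x}_1^2$ since $0\leq\delta_{k-1}^2\leq u^2$, is exactly what supplies the $+u$ in the factor $(2+u)$; the linear term supplies the $2$. Because you discarded this residual, your increment only yields the factor $2$, and the remainder of your proposal is a sequence of attempts to manufacture the missing $u$ by tightening the bound on $\abs{\widehat s_{k-1}+x_k}$ — attempts which, as you yourself observe, either fail for small $k$ or require an $nu\ll 1$ caveat that the lemma does not carry. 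None of that bookkeeping is needed once the residual term is kept.

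A secondary error: you write $Z_{k-1}+s = \widehat s_{k-1}$. In fact $Z_{k-1} = \widehat s_{k-1} - s_{k-1}$ with $s_{k-1}=\sum_{i=1}^{k-1}x_i$ the \emph{partial} exact sum, so $Z_{k-1}+s = (\widehat s_{k-1}-s_{k-1}) + s$. The paper bounds this by $((1+u)^{k-2}-1)\sum_{i=1}^{k-1}\abs{x_i} + \abs{s} \leq \norm{x}_1(1+u)^{k-2}$ — numerically the same bound you use, so this does not change the final estimate, but the identity you rely on is false. With the two factors $\abs{Z_{k-1}+s}\leq(1+u)^{k-2}\norm{x}_1$ and $\abs{\widehat s_{k-1}+x_k}\leq(1+u)^{k-2}\norm{x}_1$ (note $\widehat s_{k-1}+x_k$ carries no extra $(1+u)$ beyond $k-2$ factors, so no $(2+u)$ is needed there), the bound $2u(1+u)^{2(k-2)}\norm{x}_1^2 + u^2(1+u)^{2(k-2)}\norm{x}_1^2 = uC_k$ follows immediately.
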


\begin{proof}
     Note that $\sigma_{k-1}^2 = E[(\widehat{s}_{k-1} +x_k)^2\delta_{k-1}^2/\mathbb{F}_{k-2}] = (\widehat{s}_{k-1} +x_k)^2E[\delta_{k-1}^2/\mathbb{F}_{k-2}]$ by definition of $\mathbb{F}_{k-2}$. Then
    \begin{align*}
        X_k - X_{k-1} &= (Z_k +s)^2 - A_k  -(Z_{k-1} +s)^2 + A_{k-1} \\
        &= (Z_{k-1} +s + (\widehat{s}_{k-1} +x_k)\delta_{k-1})^2 - A_k  -(Z_{k-1} +s)^2 + A_{k-1}  \\
                      & =  2(Z_{k-1} + s) (\widehat{s}_{k-1} +x_k)\delta_{k-1} + (\widehat{s}_{k-1} +x_k)^2\delta_{k-1}^2 -\sigma_{k-1}^2                         \\
                      & = 2(Z_{k-1}+s) (\widehat{s}_{k-1} +x_k)\delta_{k-1} + (\widehat{s}_{k-1} +x_k)^2 \left( \delta_{k-1}^2 -E[\delta_{k-1}^2/\mathbb{F}_{k-2}]\right).
    \end{align*}
    Since $\abs{\delta_{k-1}} \leq u$, we have  $\abs{\widehat{s}_{k-1} +x_k} \leq (1+u)^{k-2} \sum_{i=1}^{k} \abs{x_i} \leq (1+u)^{k-2}\norm{x}_1 $, $\abs{\delta_{k-1}^2 -E[\delta_{k-1}^2/\mathbb{F}_{k-2}]} \leq u^2$ because $0 \leq \delta_{k-1}^2 \leq u^2$ and 
    $$  \abs{Z_{k-1} +s} \leq ((1+u)^{k-2} -1)  \sum_{i=1}^{k-1} \abs{x_i} + \abs{s} \leq \norm{x}_1 (1+u)^{k-2}. $$
    Thus
    \begin{align*}
        \abs{X_k - X_{k-1} } & \leq 2 u \abs{Z_{k-1}+s}  \abs{\widehat{s}_{k-1} +x_k} + u^2 \abs{\widehat{s}_{k-1} +x_k}^2               \\
                             & \leq 2u (1+u)^{2(k-2)} \norm{x}_1^2  + u^2(1+u)^{2(k-2)} \norm{x}_1^2  \\
                             & = u \norm{x}_1^2 (1+u)^{2(k-2)} (2+u).
    \end{align*}
\end{proof}

\begin{theorem}
\label{thm:x_n}
    For $0 < \lambda <1$, the martingale $X_1,\ldots, X_{n}$ satisfies under SR-nearness
    \begin{equation}
        \label{eq:x_n}
        \abs{X_n} \leq  \norm{x}_1^2   \sqrt{2u\gamma_{4(n-1)}(u)}  \sqrt{\ln(2/ \lambda)},
    \end{equation}
    with probability at least $1-\lambda$.
\end{theorem}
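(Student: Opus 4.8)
The plan is to apply the Azuma--Hoeffding inequality (Lemma~\ref{hoeffding}) to the martingale $X_1,\ldots,X_n$ constructed above, using the per-step bound $\abs{X_k-X_{k-1}}\le uC_k$ from Lemma~\ref{cst-bound}. First I would recall that $X_1 = (Z_1+s)^2 - A_1 - s^2 = (x_1+s)^2 - s^2$, so that $X_n - X_1$ is what Azuma controls; but to match the stated form $\abs{X_n}\le\ldots$, I would note either that one can re-index the martingale to start at $X_0=0$ (taking $s_1=0$ rather than $x_1$, i.e.\ summing the empty prefix), or more simply observe that $X_1$ itself is $\mathbb{F}_0$-measurable hence deterministic, and absorb it; in fact with the convention $Z_1 = \widehat s_1 - s_1$ and $s_1 = x_1$ we get $Z_1 = 0$, so $X_1 = (0+s)^2 - 0 - s^2 = 0$ (the bias/quadratic-variation term $A_1$ is also $0$). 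Thus $X_1 = 0$ and Azuma directly bounds $\abs{X_n} = \abs{X_n - X_1}$.

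Next I would compute $\sum_{k=2}^n (uC_k)^2$. With $C_k = \norm{x}_1^2 (1+u)^{2(k-2)}(2+u)$ from Lemma~\ref{cst-bound}, this is
\begin{align*}
    \sum_{k=2}^n u^2 C_k^2 &= u^2 \norm{x}_1^4 (2+u)^2 \sum_{k=2}^n (1+u)^{4(k-2)}
    = u^2 \norm{x}_1^4 (2+u)^2 \frac{(1+u)^{4(n-1)}-1}{(1+u)^4 - 1}.
\end{align*}
Now $(1+u)^4 - 1 = \gamma_4(u)$, and a short manipulation gives $\dfrac{(2+u)^2}{(1+u)^4-1} = \dfrac{(2+u)^2}{u(u+2)(u^2+2u+2)} = \dfrac{2+u}{u(u^2+2u+2)} \le \dfrac{2+u}{2u} \le \dfrac{1}{u}\cdot\dfrac{2+u}{2}$; since $u\le 1$ this is at most $\dfrac{3}{2u}$, and more crudely $\le \dfrac{2}{u}$. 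Combining, $\sum_{k=2}^n u^2 C_k^2 \le 2u\,\norm{x}_1^4\,\gamma_{4(n-1)}(u)$ after writing $(1+u)^{4(n-1)}-1 = \gamma_{4(n-1)}(u)$. (I would double-check the exact constant the authors want; the displayed bound carries a factor $2$ inside the square root, consistent with using $\dfrac{2+u}{2} \le 1$ to get $\sum u^2C_k^2 \le u\norm{x}_1^4\gamma_{4(n-1)}(u)$ and then the Azuma factor $\sqrt{2\ln(2/\lambda)}$ rearranged as $\sqrt{2u\gamma_{4(n-1)}(u)}\sqrt{\ln(2/\lambda)}$.)

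Then I would invoke the symmetric-increment form of Lemma~\ref{hoeffding} with $b_k = uC_k$ (the increments satisfy $-uC_k \le X_k - X_{k-1} \le uC_k$), which yields, with probability at least $1-\lambda$,
\begin{equation*}
    \abs{X_n - X_1} \le \sqrt{\sum_{k=2}^n (uC_k)^2}\,\sqrt{2\ln(2/\lambda)} \le \norm{x}_1^2 \sqrt{2u\,\gamma_{4(n-1)}(u)}\,\sqrt{\ln(2/\lambda)}.
\end{equation*}
Since $X_1 = 0$, this is exactly~\eqref{eq:x_n}.

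The main obstacle I anticipate is purely bookkeeping rather than conceptual: getting the index ranges and the geometric-sum constant exactly right so the bound collapses cleanly into the $\gamma_{4(n-1)}(u)$ form with the stated leading constant. In particular one must be careful that the martingale has $n-1$ genuine increments (indexed $k=2,\ldots,n$), that $X_1=0$ under the stated recursion so no stray $X_1$ term survives, and that the simplification of $(2+u)^2/((1+u)^4-1)$ is bounded by the constant the authors use (here $\le 2/u$, or $\le 1/u$ after also using $u\le 1$ generously); any looser constant would still give the right asymptotic $\sqrt{n}u$ behaviour but not match the displayed inequality verbatim. A secondary subtlety is confirming that $\abs{\delta_{k-1}^2 - E[\delta_{k-1}^2/\mathbb{F}_{k-2}]}\le u^2$ (used in Lemma~\ref{cst-bound} and already established there) together with mean-independence (Lemma~\ref{meanindp}) really does make $X_1,\ldots,X_n$ a martingale with the claimed symmetric increments — but that work is done in the preceding Doob--Meyer setup and Lemma~\ref{cst-bound}, so here it can simply be cited.
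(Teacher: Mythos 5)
Your proposal is correct and follows essentially the same route as the paper: note $X_1=0$, apply Azuma--Hoeffding with the increment bounds $uC_k$ from Lemma~\ref{cst-bound}, sum the geometric series, and simplify. The only flaw is in your justification of the final constant: the inequality ``$\tfrac{2+u}{2}\leq 1$'' that you invoke is false, and your earlier detour through $\tfrac{2+u}{2u}$ already loses the stated constant; the correct (and equally trivial) step, which the paper uses, is
\begin{equation*}
\frac{u^2(2+u)^2}{(1+u)^4-1}=u\,\frac{4+4u+u^2}{4+6u+4u^2+u^3}\leq u ,
\end{equation*}
which gives $\sum_{k=2}^n u^2C_k^2\leq u\norm{x}_1^4\gamma_{4(n-1)}(u)$ exactly and hence the displayed bound verbatim.
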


\begin{proof}
    Since $X_1=0$, Lemma~\ref{hoeffding} and Lemma~\ref{cst-bound} yields
    $$
        \abs{X_n} \leq  \sqrt{\sum_{k=2}^n u^2 C_k^2} \sqrt{2\ln(2/ \lambda)},$$
    with probability at least $1-\lambda$. Furthermore
    \begin{align*}
        \sum_{k=2}^n u^2 C_k^2 & = u^2 \sum_{k=2}^n  \norm{x}_1^4 (1+u)^{4(k-2)} (2+u)^2                                                        = u^2\norm{x}_1^4 (2+u)^2 \frac{\gamma_{4(n-1)}(u)}{(1+u)^4 -1}                                                 \\
                           & = u \norm{x}_1^4  \frac{4 + 4u + u^2}{4 + 6u + 4u^2 + u^3} \gamma_{4(n-1)}(u)                                                                                         \\
                           & \leq u \norm{x}_1^4 \gamma_{4(n-1)}(u).
    \end{align*}
    Finally, $ \abs{X_n} \leq  \norm{x}_1^2  \sqrt{2u \gamma_{4(n-1)}(u) } \sqrt{\ln(2/ \lambda)}. $    
\end{proof}
We are now in a position to state the main result of this sub-section.

\begin{theorem}
    For all $0 < \lambda <1$, the computed $\widehat{y}$ in Equation~(\ref{eq:y}) satisfies under SR-nearness
    \begin{align*}
        \frac{\abs{\widehat{y} - y}}{\abs{y}} \leq & \mathcal{K}_2^2 \sqrt{ u \gamma_{2(n+1)}(u) } \sqrt{\ln (4 / \lambda)} +
        \mathcal{K}_1^2 (1+u)^3 \big[ \sqrt{2u\gamma_{4(n-1)}(u)} \sqrt{\ln(4/ \lambda)}   \\            & + u\frac{\gamma_{2(n-1)}(u)}{2} +1\big] - \mathcal{K}_1^2,
    \end{align*}
    with probability at least $1-\lambda$. In the following, this bound will be called DM bound.
\end{theorem}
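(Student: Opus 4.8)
The plan is to reuse the decomposition recorded in~\eqref{eq:textbook-model}, namely $\abs{\widehat y - y} \le \abs{\sum_{i=1}^n x_i^2(\psi_i-1)} + \frac1n\abs{\widehat s^2\psi_{n+1} - s^2}$, but to handle the second term through the Doob--Meyer decomposition $\widehat s^2 = (Z_n+s)^2 = X_n + s^2 + A_n$ built just above, instead of expanding $\bigl((\widehat s - s)+s\bigr)^2$ as in the BC and AH proofs. Substituting gives $\widehat s^2\psi_{n+1} - s^2 = X_n\psi_{n+1} + A_n\psi_{n+1} + s^2(\psi_{n+1}-1)$, and with $\abs{\psi_{n+1}}\le(1+u)^3$, $\abs{\psi_{n+1}-1}\le(1+u)^3-1$ and $\abs s\le\norm{x}_1$ this yields
\[
\tfrac1n\abs{\widehat s^2\psi_{n+1}-s^2} \le \tfrac{(1+u)^3}{n}\bigl(\abs{X_n}+\abs{A_n}\bigr) + \tfrac{\norm{x}_1^2}{n}\bigl((1+u)^3-1\bigr).
\]

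Next I would bound the predictable compensator $A_n = \sum_{k=2}^n\sigma_{k-1}^2$ \emph{deterministically}. Since $\sigma_{k-1}^2 = (\widehat s_{k-1}+x_k)^2 \mathbb{E}[\delta_{k-1}^2/\mathbb{F}_{k-2}]$, $\mathbb{E}[\delta_{k-1}^2/\mathbb{F}_{k-2}]\le u^2$ and $\abs{\widehat s_{k-1}+x_k}\le(1+u)^{k-2}\norm{x}_1$ (exactly the bounds already used in Lemma~\ref{cst-bound}), summing the geometric series gives $A_n \le u^2\norm{x}_1^2\sum_{k=2}^n(1+u)^{2(k-2)} = u\norm{x}_1^2\frac{\gamma_{2(n-1)}(u)}{u+2} \le u\norm{x}_1^2\frac{\gamma_{2(n-1)}(u)}{2}$. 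This is the crucial point of the DM approach: the bias of $\widehat s^2$ is carried entirely by $A_n$, which costs no probability, so only $X_n$ and the inner-product term require concentration.

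Then I would split the failure probability as $\lambda/2 + \lambda/2$. From \cite[cor 4.7]{ilse}, $\abs{\sum_{i=1}^n x_i^2(\psi_i-1)} \le \norm{x}_2^2\sqrt{u\gamma_{2(n+1)}(u)}\sqrt{\ln(4/\lambda)}$ with probability at least $1-\lambda/2$; from Theorem~\ref{thm:x_n} applied with $\lambda/2$ in place of $\lambda$, $\abs{X_n} \le \norm{x}_1^2\sqrt{2u\gamma_{4(n-1)}(u)}\sqrt{\ln(4/\lambda)}$ with probability at least $1-\lambda/2$. Intersecting these two events via Lemma~\ref{lem:proba}, dividing by $\abs y$, and substituting $\norm{x}_2^2/\abs y = \mathcal{K}_2^2$ and $\norm{x}_1^2/(n\abs y) = \mathcal{K}_1^2$ gives a bound whose $\mathcal{K}_1$-part is $\mathcal{K}_1^2(1+u)^3\bigl[\sqrt{2u\gamma_{4(n-1)}(u)}\sqrt{\ln(4/\lambda)} + u\tfrac{\gamma_{2(n-1)}(u)}{2}\bigr] + \mathcal{K}_1^2\bigl((1+u)^3-1\bigr)$; writing $\mathcal{K}_1^2((1+u)^3-1) = \mathcal{K}_1^2(1+u)^3 - \mathcal{K}_1^2$ and folding the $+1$ into the bracket reproduces exactly the claimed DM bound.

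The geometric-series sums and the bookkeeping of the $\gamma$ indices are routine. The only genuinely delicate ingredient is the Doob--Meyer step itself --- checking that $X_n$ is a martingale with $X_1 = 0$ and the increment bound $\abs{X_k-X_{k-1}}\le uC_k$, so that Azuma--Hoeffding (Lemma~\ref{hoeffding}) applies --- but this has already been carried out in the derivation preceding Theorem~\ref{thm:x_n}, so the remaining work is essentially assembling the pieces and verifying the probability budget.
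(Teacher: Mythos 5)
Your proposal is correct and follows essentially the same route as the paper: the same substitution of the Doob--Meyer decomposition $\widehat s^2=X_n+s^2+A_n$ into the textbook error, the same deterministic geometric-series bound $\abs{A_n}\le u\norm{x}_1^2\gamma_{2(n-1)}(u)/2$, the same $\lambda/2+\lambda/2$ split between the inner-product term and $X_n$ via Theorem~\ref{thm:x_n}, and the same final bookkeeping with $(1+u)^3-1=\gamma_3(u)$. No gaps.
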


\begin{proof}
    Recall that $Z_n= \widehat{s} - s$ and $ (Z_n+s)^2 = X_n +s^2  + A_n$. Therefore, from Sub-section~\ref{sec:bias-textbook},
    \begin{align*}
        \widehat{y} - y & =  \sum_{i=1}^{n} x_i^2 (\psi_i -1) - \frac{1}{n} \widehat{s}^2\psi_{n+1} + \frac{1}{n}  s^2
        =  \sum_{i=1}^{n} x_i^2 (\psi_i -1) - \frac{1}{n} (Z_n +s)^2\psi_{n+1} + \frac{1}{n}  s^2                                                 \\
                        & =   \sum_{i=1}^{n} x_i^2 (\psi_i -1) - \frac{1}{n}\psi_{n+1}(X_n  + A_n)  - \frac{1}{n}  s^2(\psi_{n+1} -1).
    \end{align*}
    Since $\abs{\psi_{n+1}} \leq (1+u)^3$ and $\abs{s} \leq \norm{x}_1$, we deduce that
    \begin{align*}
        \abs{\widehat{y} - y} & \leq \abs{  \sum_{i=1}^{n} x_i^2 (\psi_i -1) } + \frac{1}{n} (1+u)^3 \left( \abs{X_n} + \abs{A_n} \right) + \frac{1}{n} \norm{x}_1^2 \gamma_3(u)    \\
                              & = \abs{  \sum_{i=1}^{n} x_i^2 (\psi_i -1) } + \frac{1}{n} (1+u)^3 \left( \abs{X_n} + \abs{A_n} + \norm{x}_1^2  \right) - \frac{1}{n} \norm{x}_1^2.
    \end{align*}
    On one hand, Theorem~\ref{thm:x_n} states that with probability at least $1-\frac{\lambda}{2}$,
    $$ \abs{X_n} \leq \norm{x}_1^2   \sqrt{2u\gamma_{4(n-1)}(u)} \sqrt{\ln(4/ \lambda)}.
    $$
    On the other hand, $A_n =  \sum_{k=2}^n E[Y_{k-1}^2/\mathbb{F}_{k-2}] = \sum_{k=2}^n   (\widehat{s}_{k-1} +x_k)^2E[\delta_{k-1}^2/\mathbb{F}_{k-2}]$, then
    \begin{align*}
        \abs{A_n} & \leq u^2\sum_{k=2}^n \abs{\widehat{s}_{k-1} +x_k}^2
        \leq u^2 \sum_{k=2}^n \left( (1+u)^{k-2} \sum_{i=1}^{k} \abs{x_i} \right)^2 \\
                  & \leq u^2 \norm{x}_1^2 \sum_{k=2}^n (1+u)^{2(k-2)}
        \leq u^2 \norm{x}_1^2 \frac{\gamma_{2(n-1)}(u)}{2u + u^2} \leq u \norm{x}_1^2 \frac{\gamma_{2(n-1)}(u)}{2}.
    \end{align*}
    Moreover~\cite[cor 4.7]{ilse} yields:
    \begin{align*}
        \abs{  \sum_{i=1}^{n} x_i^2 (\psi_i -1) } & \leq \norm{x}_2^2 \sqrt{ u\gamma_{2(n+1)}(u) } \sqrt{\ln (4 / \lambda)} &  & \text{with probability at least $1- \frac{\lambda}{2}$}.
        \end{align*}
    Finally, Lemma~\ref{lem:proba} implies 
    \begin{align*}
        \frac{\abs{\widehat{y} - y}}{\abs{y}} \leq & \frac{\norm{x}_2^2}{\abs{y}}  \sqrt{ u \gamma_{2(n+1)}(u) } \sqrt{\ln (4 / \lambda)} + \frac{\norm{x}_1^2}{n\abs{y}} (1+u)^3 \Bigg(  \sqrt{2u\gamma_{4(n-1)}(u)} \sqrt{\ln(4/ \lambda)} \\
                                                   & + u\frac{\gamma_{2(n-1)}(u)}{2} +1 \Bigg) - \frac{\norm{x}_1^2}{n\abs{y}}          \\
        =                                          & \mathcal{K}_2^2 \sqrt{ u \gamma_{2(n+1)}(u) } \sqrt{\ln (4 / \lambda)} + \mathcal{K}_1^2 (1+u)^3 \Bigg( \sqrt{2u\gamma_{4(n-1)}(u)} \sqrt{\ln(4/ \lambda)}   \\            & + u\frac{\gamma_{2(n-1)}(u)}{2} +1\Bigg) - \mathcal{K}_1^2,
    \end{align*}
     with probability at least $1-\lambda$.
\end{proof}

\section{Pairwise textbook and pairwise two-pass}
\label{sec:parwise-non-linear}
In this section, we illustrate the continued applicability of SR results on the forward error of the pairwise summation to the forward error of both pairwise textbook and pairwise two-pass algorithms (ie. the two-pass and textbook algorithms in which sums are computed pairwise). The following theorem derives a probabilistic bound for the pairwise textbook using the BC method.

\begin{theorem}
    For the pairwise textbook algorithm, for all $0 < \lambda <1$, the computed $\widehat{y}$ in Equation~(\ref{eq:y}) satisfies under SR-nearness
    $$  \frac{\abs{ \widehat{y} - y }}{\abs{ y }}
        \leq  \mathcal{K}_2^2 \sqrt{ 2\gamma_{\log(n)+1}(u^2)/ \lambda} + \mathcal{K}_1^2 \left( (1+u)^3 \big(\sqrt{ 2\gamma_{\log(n)}(u^2)/ \lambda}+1 \big)^2 -1\right),
    $$
    with probability at least $1-\lambda$.
\end{theorem}

\begin{proof}
    Equation~(\ref{eq:textbook-model}) states that
    $\abs{ \widehat{y} - y } \leq \abs{\sum_{i=1}^{n} x_i^2 (\psi_i -1)  } + \frac{1}{n}\mathcal{B}$. Since the sum is pairwise, the term $\prod_{k=\max\{2, i\}}^{n+1}(1+\eta_k)$ in $\psi_i$ can be replaced with a term $\prod_{k=1}^{\log(n)}(1+\eta_k)$ as shown in
    Section~\ref{sec:parwise}. Thus:
    \begin{align*}
        \abs{\sum_{i=1}^{n} x_i^2 (\psi_i -1)  } \leq \norm{x}_2^2 \sqrt{ 2\gamma_{\log(n)+1}(u^2)/ \lambda} & & \text{with probability at least $1-\frac{\lambda}{2}$},\\
        \abs{ \widehat{s} -s }\leq  \norm{x}_1 \sqrt{2 \gamma_{\log(n)}(u^2)/ \lambda} & & \text{with probability at least $1-\frac{\lambda}{2}$}.
    \end{align*}
    Since, $\abs{\psi_{n+1}} \leq (1+u)^3$ and $\abs{s} \leq \norm{x}_1$, we have with probability at least $1-\frac{ \lambda}{2}$
    \begin{align*}
        \mathcal{B} & \leq (1+u)^3 \norm{x}_1^2 \left(2\gamma_{\log(n)}(u^2)/ \lambda + 2\sqrt{ 2\gamma_{\log(n)}(u^2)/ \lambda} \right) + \norm{x}_1^2 \left((1+u)^3 -1\right) \\
                   & = (1+u)^3 \norm{x}_1^2 \left(2\gamma_{\log(n)}(u^2)/ \lambda + 2\sqrt{ 2\gamma_{\log(n)}(u^2)/ \lambda} +1 \right) - \norm{x}_1^2                          \\
                   & = (1+u)^3 \norm{x}_1^2 \left(\sqrt{ 2\gamma_{\log(n)}(u^2)/ \lambda} +1 \right)^2 - \norm{x}_1^2.
    \end{align*}
    Finally, Lemma~\ref{lem:proba} shows that with probability at least $1-\lambda$,
    \begin{align*}
        \frac{\abs{ \widehat{y} - y }}{\abs{ y }} & \leq \frac{1}{\abs{y}}\abs{\sum_{i=1}^{n} x_i^2 (\psi_i -1)  } + \frac{1}{n\abs{y}} \mathcal{B}                                                                    \\
                                                  & \leq \mathcal{K}_2^2 \sqrt{ 2\gamma_{\log(n)+1}(u^2)/ \lambda} + \mathcal{K}_1^2 \left( (1+u)^3 \big(\sqrt{ 2\gamma_{\log(n)}(u^2)/ \lambda}+1 \big)^2 -1\right).
    \end{align*}
\end{proof}

The following theorem shows the probabilistic bound for the pairwise textbook algorithm using the AH method.

\begin{theorem}
    For the pairwise textbook algorithm, for all $0 < \lambda <1$, the computed $\widehat{y}$ in Equation~(\ref{eq:y}) satisfies under SR-nearness
    \begin{align*}
        \frac{\abs{ \widehat{y}  - y}}{\abs{ y}} \leq & \mathcal{K}_2^2 \sqrt{ u \gamma_{2(\log(n)+1)}(u) } \sqrt{\ln (4 / \lambda)} \\ &+ \mathcal{K}_1^2 \left( (1+u)^3 \big(\sqrt{ u \gamma_{2\log(n)}(u) } \sqrt{\ln (4 / \lambda)} +1 \big)^2 -1\right),
    \end{align*}
    with probability at least $1-\lambda$.
\end{theorem}

\begin{proof}
    Equation~(\ref{eq:textbook-model}) states that
    $\abs{ \widehat{y} - y } \leq \abs{\sum_{i=1}^{n} x_i^2 (\psi_i -1)  } + \frac{1}{n}\mathcal{B}.$ 
    Moreover, Section~\ref{sec:parwise} shows
    \begin{align*}
        \abs{  \sum_{i=1}^{n} x_i^2 (\psi_i -1) }  \leq \norm{x}_2^2 \sqrt{ u\gamma_{2(\log(n)+1)}(u) } \sqrt{\ln (4 / \lambda)} &  \text{ with probability at least $1-\frac{\lambda}{2}$}, \\
        \abs{ \widehat{s} -s }                     \leq  \norm{x}_1 \sqrt{ u\gamma_{2\log(n)}(u) } \sqrt{\ln (4 / \lambda)} &  \text{ with probability at least $1-\frac{\lambda}{2}$}.
    \end{align*}
    As the previous proof, we can show that with probability at least $1-\frac{\lambda}{2}$,
    $$ \mathcal{B} \leq  (1+u)^3 \norm{x}_1^2 \left(\sqrt{ u\gamma_{2\log(n)}(u) } \sqrt{\ln (4 / \lambda)} +1 \right)^2 - \norm{x}_1^2. 
                   $$
    Finally, with probability at least $1-\lambda$,
    \begin{align*}
        \frac{\abs{ \widehat{y} - y }}{\abs{ y }} \leq & \mathcal{K}_2^2 \sqrt{ u \gamma_{2(\log(n)+1)}(u) } \sqrt{\ln (4 / \lambda)}                                         \\
                                                       & + \mathcal{K}_1^2 \left( (1+u)^3 \big(\sqrt{ u \gamma_{2\log(n)}(u) } \sqrt{\ln (4 / \lambda)} +1 \big)^2 -1\right).
    \end{align*}
\end{proof}

Similar bounds are reached for the pairwise two-pass using the same methods.
\section{Error bound analysis}
\label{sec:error}
Table~\ref{table-n} shows the asymptotic forward error bounds for the textbook algorithm. Higher order terms in $u$ have been dropped when $nu \ll 1$ and uniquely for BC when $nu \gg 1$ 
and $nu^2 \ll 1$, and only dominant terms are shown. The results in the table are based on: $\gamma_n(u) \approx nu + O(n u^2)$ and $ \sqrt{u\gamma_n(u)} \approx \sqrt{\gamma_n(u^2)} \approx \sqrt{n}u + O(nu^2)$ when $nu \ll 1$. $\gamma_n(u) \approx e^{nu} $, $ \sqrt{u\gamma_n(u)} \approx \sqrt{u}e^{\frac{n}{2}u}$ and $ \sqrt{\gamma_n(u^2)} \approx \sqrt{n}u + O(nu^2)$ when $nu \gg 1$ and $nu^2 \ll 1$.

\begin{table}[h]
    {\renewcommand{\arraystretch}{1.7}

        \centering

        \begin{tabular}{|C{0.9cm}|C{4.1cm}|C{6.7cm}|}
            \cline{2-3}
            \cline{2-3}
            \multicolumn{1}{c|}{} & $nu \ll 1$               & $nu \gg 1$ and $nu^2 \ll 1$  \\
            \hline
            Det                   & $(\mathcal{K}_2^2 + 2\mathcal{K}_1^2)nu$                               & $(\mathcal{K}_2^2 + \mathcal{K}_1^2) e^{(2n+1)u}$                         \\
            \hline
            BC                    & $(\mathcal{K}_2^2 + 2\mathcal{K}_1^2 ) \sqrt{2/\lambda}\sqrt{n}u$      & $(\mathcal{K}_2^2 + 2\mathcal{K}_1^2 ) \sqrt{2/\lambda}\sqrt{n}u$       \\
            \hline
            AH                    & $(\mathcal{K}_2^2 + 2\mathcal{K}_1^2 ) \sqrt{2\ln(4/\lambda)}\sqrt{n}u$ & $(\mathcal{K}_2^2  + \mathcal{K}_1^2 \sqrt{u\ln(4/\lambda)} )  \sqrt{u\ln(4/\lambda)} e^{(2n+1)u}$ \\
            \hline
            DM                  & $(\mathcal{K}_2^2 + 2\mathcal{K}_1^2 ) \sqrt{2\ln(4/\lambda)}\sqrt{n}u$ & $\left(\sqrt{u\ln(4/\lambda)}(\mathcal{K}_2^2  + \sqrt{2}\mathcal{K}_1^2) +  \mathcal{K}_1^2\frac{u}{2} \right) e^{(2n+1)u}$ \\
            \hline
        \end{tabular}
        \caption{The asymptotic behavior of the textbook forward error bounds for a fixed probability $\lambda$ and over $n$ up to a constant.}
        \label{table-n}
    }
\end{table}

This table displays the advantage of the probabilistic bounds of the textbook forward error in terms of $O(\sqrt{n}u)$ compared to the deterministic bounds in $O(nu)$, when $nu \ll 1$. Additionally, the BC method is far better when $nu\gg1$ and $nu^2 \ll 1$. The previous discussion also holds for the two-pass forward error bounds. 

\subsection{Numerical experiments}
\label{sec:numerical-experiments}

We performed a series of numerical experiments comparing these new probabilistic bounds to the deterministic ones. We show that probabilistic bounds are tighter and accurately reflect the behavior of SR-nearness forward errors. Two types of plots are presented. Firstly, the plots are displayed over $n$ and show that for large values of $n$, BC bounds provide significant benefits compared to AH or DM bounds for the textbook algorithm.
Secondly, the plots are shown over $\lambda$, and show that AH bound holds a significant advantage for higher probabilities.
All SR computations are repeated 30 times with verificarlo~\cite{verificarlo}. All samples and the forward error of the average of the 30 SR instances are plotted.

\subsubsection{Textbook algorithm}
We present a numerical application of the textbook algorithm for floating-points chosen uniformly at random between $0$ and $1$. 
\begin{figure}
    \centering
    \subfloat{
        \hspace{-.6cm}\includegraphics[scale=0.29]{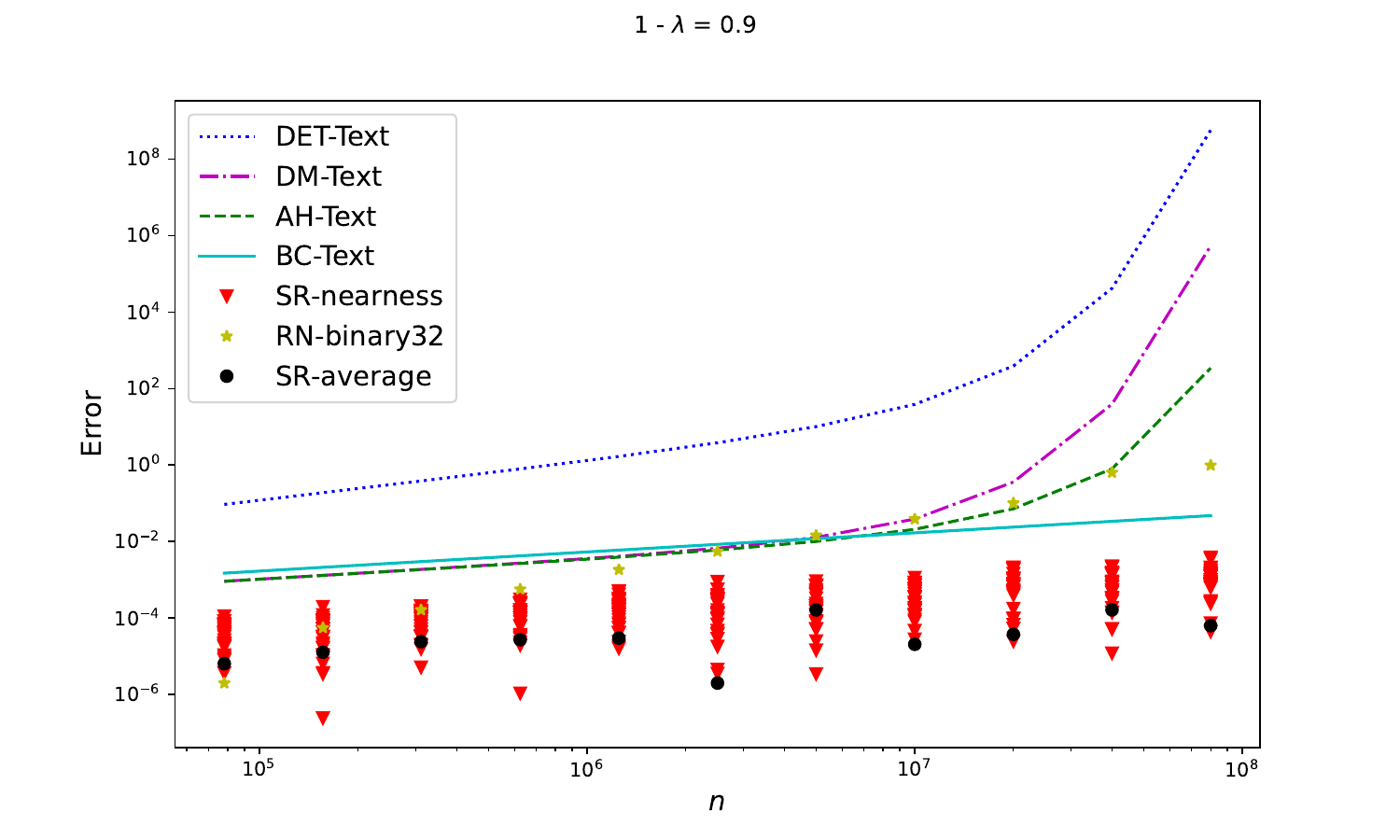}
    } \subfloat{
        \hspace{-0.8cm}\includegraphics[scale=0.29]{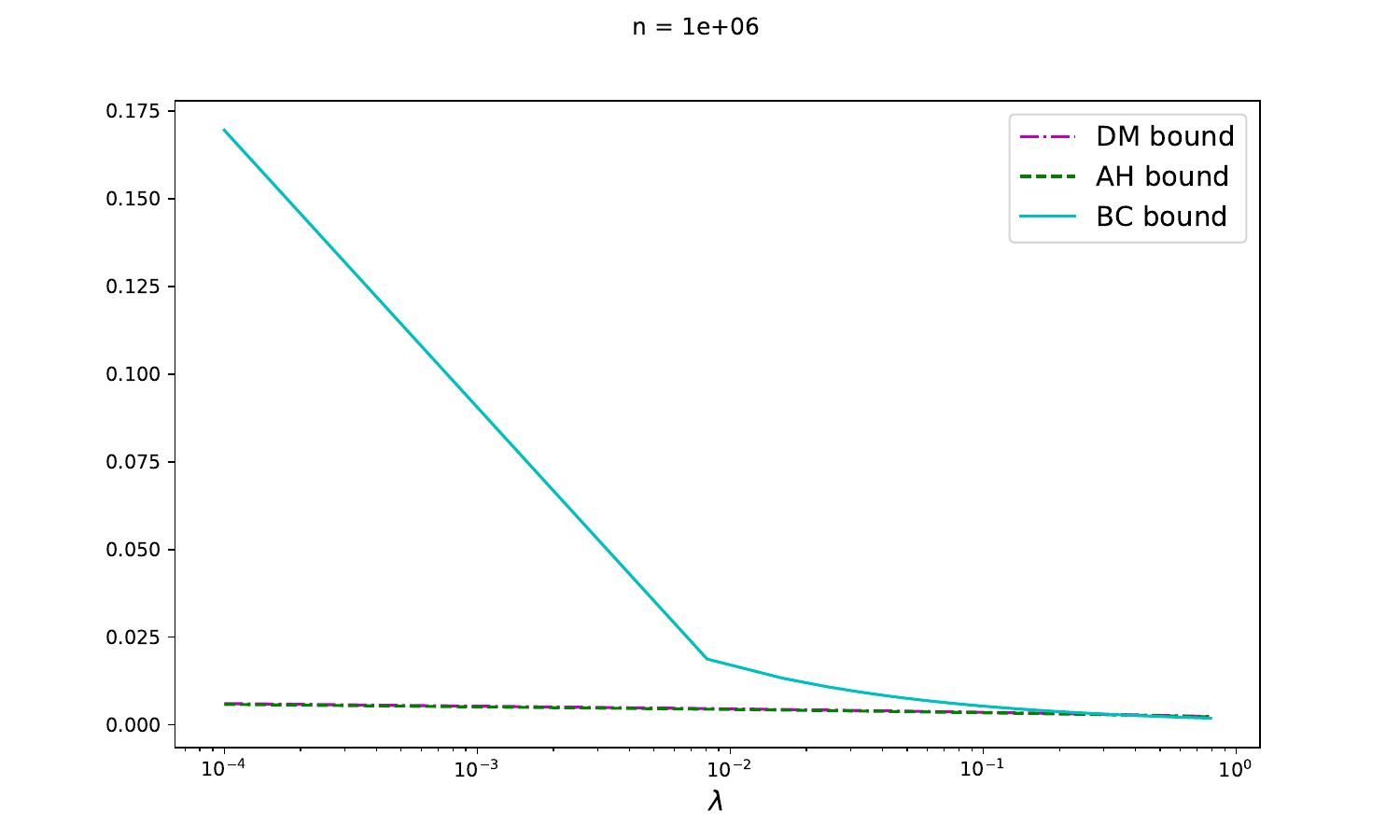}\hspace{-0.8cm}
    }
    \caption{Probabilistic error bounds over $n$ with probability $1- \lambda =0.9$ (left) and over $\lambda$ with $n=10^6$ (right) vs deterministic bound for the textbook algorithm.}
    \label{fig:textbook}
\end{figure}\vspace{-1.5cm}
\begin{figure}
    \subfloat{
        \hspace{-.6cm}\includegraphics[scale=0.29]{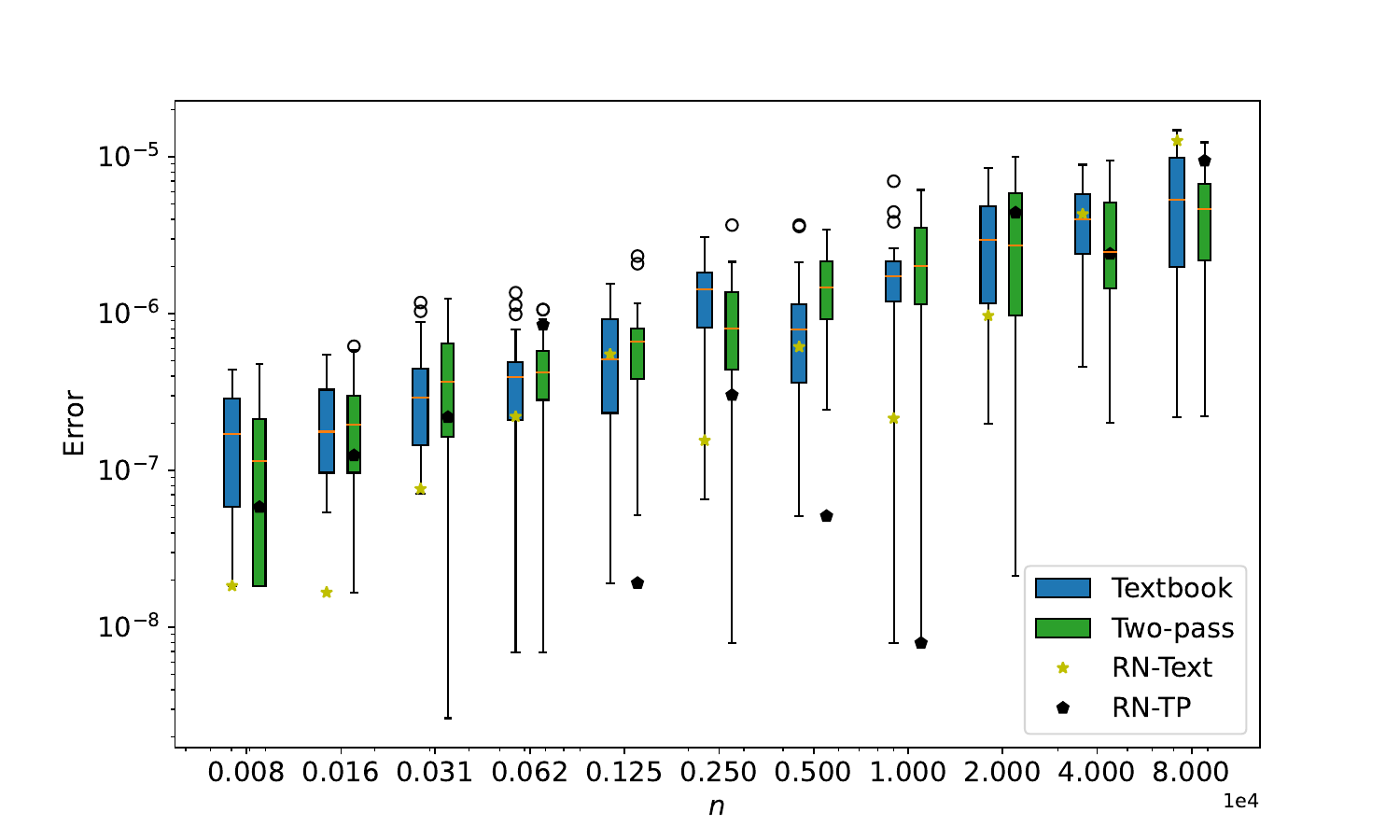}
    } \subfloat{
        \hspace{-0.8cm}\includegraphics[scale=0.29]{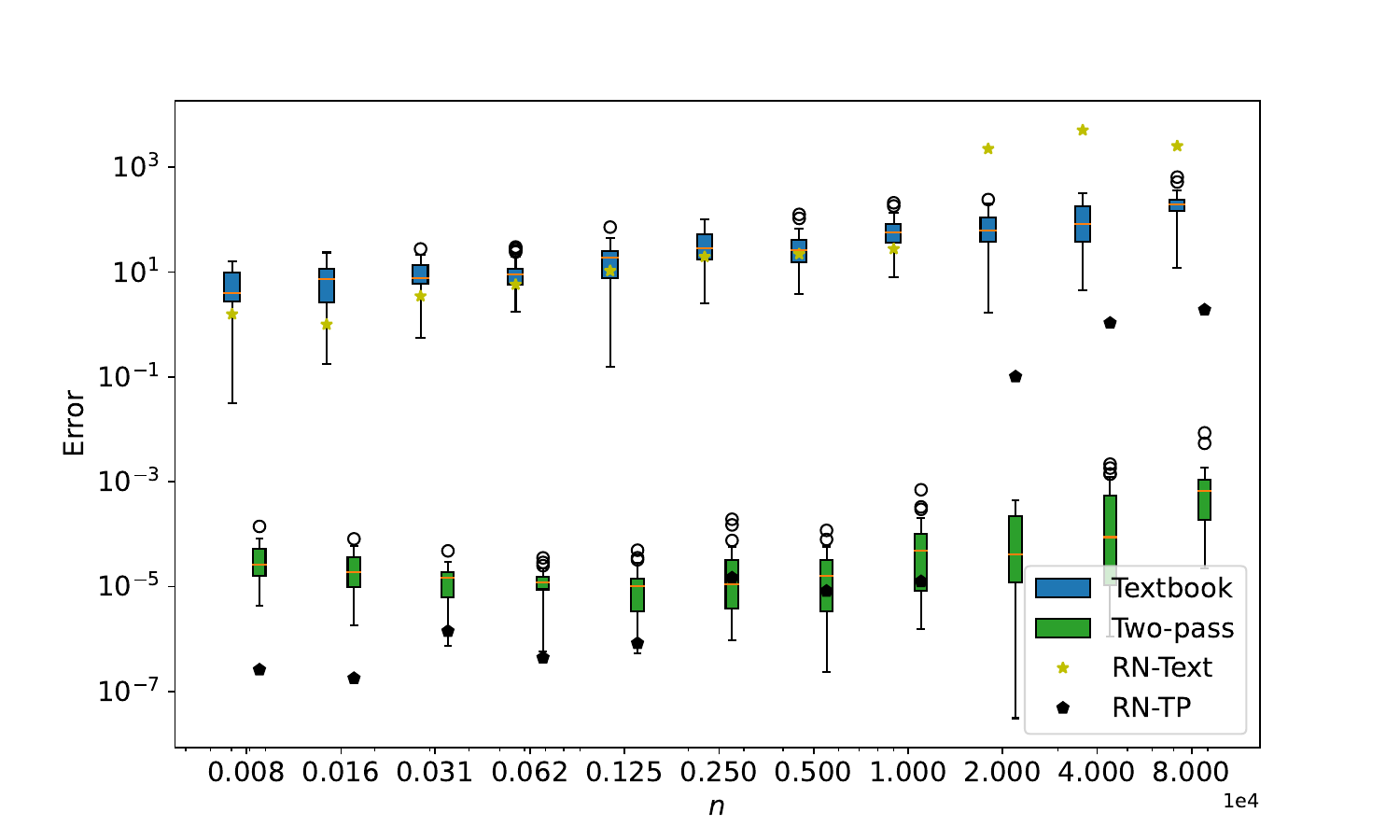}\hspace{-0.8cm}
    }
    \caption{The forward errors of textbook and two-pass algorithms in binary32 precision for floating-points chosen uniformly at random in $[-1;1]$ (left) and $[1024;1025]$ (right). }
    \label{fig:text-vs-tp}
\end{figure}

In Figure~\ref{fig:textbook}, triangles represent instances of the SR-nearness relative errors evaluation in binary32 precision, a circle marks the relative errors of the $30$ instances average, and a star represents the IEEE RN-binary32 value. Interestingly, for small $n$, the left figure shows that AH, DM, and BC bounds are comparable with a slight advantage for AH-Text and DM. However, as shown in Table~\ref{table-n}, when $nu \gg 1$, AH and DM bounds grow exponentially faster than BC bound.

As expected, for a fixed $n$, the figure on the right shows that the three bounds are close for a probability around $0.9$. Nevertheless, AH and DM bounds are more accurate for higher probabilities than BC bound.
The result is unsurprising because, generally, Azuma-Hoeffding inequality provides a bound for the deviation of the sum of a sequence of independent and bounded random variables, martingales in this instance, which gives tighter bounds for higher probabilities. In contrast, Bienaymé–Chebyshev inequality is a less restrictive result that provides an upper bound for the probability of deviation between the mean of a distribution and a particular value. The two-pass algorithm exhibits analogous boundary behavior.

\subsubsection{Textbook against two-pass}
We now compare the forward errors of both algorithms under SR. In figure~\ref{fig:text-vs-tp}, when the floating-point numbers are randomly chosen with zero mean distribution (left), the absorption errors cancel each other out because both positive and negative errors are uniformly distributed. Therefore, the computed mean is close to zero with low absolute error, and the two-pass algorithm degenerates into the textbook algorithm. Interestingly, this effect is captured by the theoretical bounds because the condition term $\mathcal{K}_2^2 + 2\mathcal{K}_1^2$ becomes smaller for zero-mean distributions. This is confirmed by the experiment in the left figure, which shows a similar forward error for the two algorithms, whether for SR or RN. 

As expected, the figure on the right illustrates that when random floating-point numbers are uniformly selected from the interval $[1024, 1025]$, the two-pass algorithm outperforms the textbook algorithm using SR or RN. The mean centering in the two-pass algorithm avoids cancellations and increases its accuracy. While the quantities $\sum_{i=1}^n x_i^2$ and $\frac{1}{n}s^2$ are inevitably very large and have the same order of magnitude, their subtraction yields a loss of significant digits in the result, which can compromise the accuracy of the textbook outcome. It is evident from this figure that the use of SR avoids stagnation for $n \geq 10^4$.

\section{Conclusion}

Many computations are non-linear in various fields such as numerical analysis. In this paper, we have chosen variance computation as an example.
In 1983, Chan, Golub, and LeVeque investigated the forward error of variance computation algorithms using RN. 
To the best of our knowledge, this is the first theoretical study of this problem using stochastic rounding as well as of any algorithm with non-linear errors. In this paper, we have presented probabilistic bounds for two variance computation algorithms that exhibit non-linear errors under SR. 

Two methods are used to estimate the forward error of computations: the BC method, which is suitable for large problem sizes $n$, and the AH method, which is preferable for higher probabilities. The study demonstrates that using SR results in probabilistic bounds on the forward error proportional to $\sqrt{n}u$, which is better than the deterministic bound in $O(nu)$ when using the default rounding mode.

While introducing pairwise algorithm in summation, textbook, and two-pass algorithms, SR leads to probabilistic bounds proportional to $\sqrt{\log(n)}u$, instead of $O(\log(n)u)$ for RN. We also demonstrate that the two-pass algorithm performs better than the textbook algorithm under SR, as it does under RN. 

A new approach based on the Doob-Meyer decomposition has been proposed as an alternative method to AH for non-linear SR computations. Our proposed approach contributes to developing new methodologies to bound the algorithms forward error under SR. Though asymptotically in $n$, this approach is equivalent to the previous two methods, we believe that it can be extended to other algorithms.

The scripts for reproducing the numerical experiments in this paper are published in the repository \url{https://github.com/verificarlo/sr-non-linear-bounds}.

\section*{Acknowledgments}
This research was supported by the InterFLOP (ANR-20-CE46-0009) project of the French National Agency for Research (ANR).

\bibliographystyle{siamplain}
\bibliography{references}

\end{document}